\documentclass[a4paper,11pt,oneside]{article}
\makeatletter


\addtolength\oddsidemargin{.5\textwidth}
\addtolength\oddsidemargin{-6.75cm}
\setlength\textwidth{13.5cm} 
\addtolength\topmargin{-.5in}
\addtolength\textheight{1in} 


\usepackage[utf8]{inputenc} 
\usepackage[T1]{fontenc}
\usepackage[english]{babel}
\usepackage[autolanguage]{numprint} 
\usepackage{textcomp,pifont} 
\usepackage{enumerate} 



\newcommand\squotes[1]{‘#1’} 
\newcommand\dquotes[1]{“#1”} 
\newcommand\quotes\dquotes

\newenvironment{NB}{\smallskip\noindent\ding{42}\quad\it}{\par\smallskip} 


\usepackage{amsmath,amsthm,amssymb,mathtools}


\theoremstyle{definition}
\newtheorem{dfn}{Definition}

\theoremstyle{plain}

\newtheorem{lem}[dfn]{Lemma}
\newtheorem{ppn}[dfn]{Proposition}
\newtheorem{thm}[dfn]{Theorem}
\newtheorem{cor}[dfn]{Corollary}

\theoremstyle{remark}
\newtheorem{rmk}[dfn]{Remark}


\renewcommand\[{\begin{equation}}
\renewcommand\]{\end{equation}}
\renewcommand\epsilon\varepsilon
\renewcommand\phi\varphi
\renewcommand\leq\leqslant
\renewcommand\geq\geqslant
\renewcommand\setminus\smallsetminus
\renewcommand\emptyset\varnothing
\renewcommand\div{\mathbin{/}}
\renewcommand\Pr\Prbb \DeclareMathOperator\Prbb{\mathbb{P}}


\newcommand\mbb\matbb
\newcommand\mbf\mathbf
\newcommand\mcal\mathcal
\newcommand\mfrak\mathfrak
\renewcommand\mit\mathit
\newcommand\mrm\mathrm
\newcommand\mopen\mathopen
\newcommand\mclose\mathclose
\newcommand\ab\allowbreak


\newcommand\NN{\mathbf{N}}

\newcommand\RR{\mathbf{R}}
\newcommand\ZZ{\mathbf{Z}} 

\newcommand\bcdot{\mathord{\pmb{\boldsymbol{\cdot}}}} 
\newcommand\1[1]{\mathbf{1}_{#1}} 
\newcommand\dx[1]{d\mspace{-1mu}\mathord{#1}} 
\newcommand\tsfrac[2]{\bgroup\textstyle\frac{1}{2}\egroup} 
\newcommand\bstackrel[2]{\mathrel{\underset{#1}{#2}}} 

\DeclareMathOperator\sign{sgn} 
\DeclareMathOperator\card{card} 
\DeclareMathOperator\trace{tr} 

\DeclarePairedDelimiter\abs{\lvert}{\rvert}
\DeclarePairedDelimiter\ceil{\lceil}{\rceil}
\DeclarePairedDelimiter\norm{\lVert}{\rVert}


\newcounter{Ca}
\renewcommand\theCa{\mathrm{\alph{Ca}}}
\def\Calabel#1{\@bsphack\protected@write\@auxout{}{\string\newlabel{#1}{{\theCa}{\thepage}}}\@esphack}
\newcommand\setCa[1]{\stepcounter{Ca}\theCa\Calabel{#1}}


\DeclareMathOperator\E{\mathbb{E}} 
\DeclareMathOperator\var{Var} 
\DeclareMathOperator\cov{Cov} 


\newcommand\Hmoh{\eta} 
\newcommand\Past{\mcal{P}}
\newcommand\Future{\mcal{F}} 
\newcommand\Drift{\mathbf{D}} 
\newcommand\Filtration{(\mcal{B}_t)_t} 

\DeclarePairedDelimiter\integers{\mathopen{[\![}}{\mathclose{[\![}} 

\newcommand\psymb{\mathord{+}}
\newcommand\msymb{\mathord{-}}
\newcommand\Psymb{\mathord{+}_|}
\newcommand\Msymb{\mathord{-}_|} 


\title{No simple arbitrage for fractional Brownian motion}
\author{Rémi Peyre\footnote{CNRS \& Institut Élie-Cartan de Lorraine (Univ.\ Lorraine); \texttt{remi.peyre@univ-lorraine.fr}.}}
\date{August 3, 2015}

\makeatother

\begin{document}

\maketitle

\begin{abstract}
We prove the following result: For $(Z_t)_{t \in \RR}$ a fractional Brownian motion with arbitrary Hurst parameter, there does not exist any stopping time $\tau$ adapted to the natural filtration of the increments of $Z$ such that, with positive probability, $\tau$ a local minimum at right of the trajectory of $Z$.
\end{abstract}

\section{Introduction}

\subsection{Context}\label{sec:Context}

In this article, we consider a filtered probability space $(\Omega, \Pr, \mcal{B}, (\mcal{B}_t)_{t \in \RR})$. Notation ‘$\omega$’ will implicitly refer to eventualities of $\Omega$; we will use it from time to time when needing to make the dependency on the random phenomenon perfectly clear. We consider a (bilateral) Brownian motion $(W_t)_{t \in \RR}$ whose increments are adapted to our filtered space, which means, for all $t \in \RR$, for all $u \leq 0$, $(W_{t + u} - W_t)$ is $\mcal{B}_t$-measurable, while for all $v \geq 0$, $(W_{t + v} - W_t)$ is independent from $\mcal{F}_t$.

We fix once for all some arbitrary parameter $H \in (0, 1)$ (so that, in the sequel, \quotes{absolute} constants may actually depend on $H$) such that $H \neq 1/2$; moreover, in all this article, $(H - 1/2)$ may be referred to as $\Hmoh$. Then we consider the fractional Brownian motion (fBm) $(Z_t)_{t \in \RR}$ driven by $W$ with Hurst parameter $H$, which means that
\[\label{ZofdW}
Z_t \coloneqq C_1 \int_{\RR} \bigl((t - s)_+^{\Hmoh} - (-s)_+^{\Hmoh}\bigr) \dx{W}_s
,\]
(with the convention that $0^r = 0\ \forall r \in \RR$), where
\[\label{C1}
C_1 \coloneqq \biggl(\frac{1}{2 H} + \int_0^{\infty} \bigl((1 + s)^\eta - s^\eta\bigr)^2 \dx{s}\biggr)^{-1/2}
.\]
Then, the properties of $Z$ are well known: it is a centred Gaussian process whose increments are adapted to $\Filtration$, with $\var (Z_t - Z_s) = \abs{t - s}^{2 H}$ and $Z_0 = 0$ a.s.; its trajectories are locally $(H - \epsilon)$-Hölder with divergence in $O (\abs{t}^{H + \epsilon})$ at infinity, etc.\ (see \cite[Chap.~2]{Nourdin-book-fBm} by example).

\begin{rmk}
Here the integral in the r-h.s.\ of \eqref{ZofdW} should be seen as a deterministic integral rather than as an Itô integral. Indeed, integrating by parts, one has: (here in the case $t > 0$),
\begin{multline}\label{ZofW.0}
C_1^{-1} Z_t
= \int_{\RR} \bigl((t - s)_+^{\Hmoh} - (-s)_+^{\Hmoh}\bigr) \dx{W}_s \\
= \int_{-\infty}^0 \bigl((t - s)^{\Hmoh} - (-s)^{\Hmoh}\bigr) \dx{W}_s + \int_0^t (t - s)^{\Hmoh} \dx{W}_s \\
= \Bigl[\bigl((t - s)^{\Hmoh} - (-s)^{\Hmoh}\bigr) W_s\Bigr]_{s = -\infty}^0 - \Hmoh \int_{-\infty}^0 \bigl((t - s)^{\Hmoh - 1} - (-s)^{\Hmoh - 1}\bigr) W_s \dx{s} \\
+ \Bigl[(t - s)^{\Hmoh} (W_s-W_t)\Bigr]_{s = 0}^t - \Hmoh \int_0^t (t - s)^{\Hmoh - 1} (W_s - W_t) \dx{s} \\
= t^{\Hmoh} W_t - \Hmoh \int_{-\infty}^t \bigl((t - s)^{\Hmoh - 1} - (-s)_+^{\Hmoh - 1}\bigr) (W_s - \1{s > 0} W_t) \dx{s}
,\end{multline}
where all the computations are licit (with absolutely converging integrals) because of the properties of regularity and slow divergence of the (ordinary) Brownian motion.
\end{rmk}

\begin{rmk}
It has to be stressed that in all this article, actually we are not interested in the values themselves of the processes $W$ and $Z$, but rather in their \emph{increments}. This way, the fact that $W_0, Z_0 = 0$ should be considered as a mere convention, completely unessential though convenient.
\end{rmk}

\subsection{Main result}

Now we turn to defining the central concept of this article, which we call \quotes{arbitrage stopping times}:

\begin{dfn}[Local minimum at right]
For $X\colon \RR \to \RR$ a (deterministic) trajectory, $t \in \RR$, we say that $t$ is a \emph{local minimum at right} (l.m.a.r.) for $X$ when the following holds:
\[
\exists \epsilon > 0 \quad \forall v \in [0, \epsilon] \quad X_{\tau + v} \geq X_\tau
.\]
\end{dfn}

\begin{dfn}[Arbitrage time]
For $(X_t)_{t \in \RR}$ a real random process, for $\tau$ a random time, we say that $\tau$ is \emph{arbitrage} for $Z$ when there is positive probability that $\tau (\omega)$ is a local minimum at right for $Z (\omega)$.
\end{dfn}

In this article, our goal will be to prove the following

\begin{thm}\label{thm:main}
In the context of \S~\ref{sec:Context}, there does not exist any stopping time adapted to $\Filtration$ which would be arbitrage for $Z$.
\end{thm}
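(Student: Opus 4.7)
The plan is to exploit the Mandelbrot--van Ness representation \eqref{ZofdW} to decompose, for each $v > 0$,
\[
Z_{\tau+v} - Z_\tau = D_v + N_v,
\]
where $D_v := C_1 \int_{-\infty}^{\tau} \bigl[(\tau+v-s)^{\Hmoh} - (\tau-s)^{\Hmoh}\bigr] \dx{W}_s$ is $\mcal{B}_\tau$-measurable, while $N_v := C_1 \int_{\tau}^{\tau+v} (\tau+v-s)^{\Hmoh} \dx{W}_s$ involves only the increments of $W$ strictly after $\tau$. By the stopping-time property of $\tau$ with respect to $\Filtration$, conditionally on $\mcal{B}_\tau$ the process $(N_v)_{v \geq 0}$ is a Riemann--Liouville-type fractional process of index $H$, with a Gaussian law independent of $\omega$. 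The l.m.a.r.\ event then reads $\{D_v + N_v \geq 0 \text{ for all } v \in [0, \epsilon]\}$ for some random $\epsilon > 0$.

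I would next reparametrise in log-time by setting $q_s := e^{sH}(Z_{\tau+e^{-s}} - Z_\tau) = d_s + n_s$ with $d_s := e^{sH} D_{e^{-s}}$ and $n_s := e^{sH} N_{e^{-s}}$, so that the l.m.a.r.\ event becomes $\{q_s \geq 0 \text{ for all } s \geq s_0\}$ for some random $s_0 < \infty$. A direct covariance computation, exploiting the self-similarity of $W$, shows that the conditional law of $n$ given $\mcal{B}_\tau$ is that of a centred stationary Gaussian process in $s$, with constant marginal variance $\sigma^2 = C_1^2/(2H)$ and covariance $\E[n_s n_{s+T}]$ depending only on $T$ and decaying exponentially to $0$ as $T \to \infty$; in particular, $n$ is mixing, hence ergodic, and its conditional law is moreover the same for almost every realisation of $\mcal{B}_\tau$.

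The heart of the proof is then to show that, for a.e.\ $\omega$, $\Pr(\exists s_0, \forall s \geq s_0, q_s \geq 0 \mid \mcal{B}_\tau)(\omega) = 0$. I would proceed in two steps. First, a stopping-time avatar of the It\^o isometry applied to the integral defining $D$ yields that $\E[d_s^2]$ is bounded uniformly in $s$; a Fubini-type estimate then gives, for a.e.\ $\omega$, that the set $\{s \geq 0 : d_s(\omega) \leq 1\}$ has positive lower density in $s$, so one can extract a well-spaced sequence $(s_k(\omega))_{k \geq 1}$ tending to $+\infty$ on which $d_{s_k} \leq 1$. Second, by the mixing of $n$ and its independence from $\mcal{B}_\tau$, the Gaussian sequence $(n_{s_k})_k$ is asymptotically uncorrelated, so a second-moment Borel--Cantelli argument guarantees that almost surely infinitely many $k$ satisfy $n_{s_k} < -1$; at each such $k$, $q_{s_k} < 0$, ruling out the l.m.a.r.\ property.

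The main obstacle is that $D_v$ and $N_v$ are of the same pathwise order $v^H$, so one cannot hope to dominate $d$ by $n$ in any pointwise sense. The success of the ergodic approach rests crucially on the log-scale stationarity of $n$ together with its decoupling from $\mcal{B}_\tau$: once $d$ is frozen by the conditioning, the intrinsic ``down-fluctuations'' of the independent process $n$ are frequent enough (thanks to mixing and a strictly positive marginal $\Pr(n_0 < -1) > 0$) to intersect the set of times where $d$ itself stays controlled. Executing the Borel--Cantelli step uniformly in $\omega \in \mcal{B}_\tau$, while exploiting only a mild moment control on $d$ (rather than its distributional properties, which are lost for general stopping times), is where I expect the bulk of the technical work to concentrate.
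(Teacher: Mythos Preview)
Your decomposition $Z_{\tau+v}-Z_\tau = D_v + N_v$, the log-reparametrisation, and the observation that the noise $n_s$ is (conditionally) a stationary, mixing Gaussian process are all correct and coincide with what the paper does (Lemma~\ref{lem:Drift+Levy}, and the law-of-iterated-logarithm Lemma~\ref{lem:LIL} plays the role of your second Borel--Cantelli step). The structure of your argument is the right one.

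The gap is in your first step: the claim that ``a stopping-time avatar of the It\^o isometry'' gives $\sup_s \E[d_s^2]<\infty$. No such isometry is available here. The integrand in $D_v=C_1\int_{-\infty}^{\tau}[(\tau+v-s)^{\Hmoh}-(\tau-s)^{\Hmoh}]\,dW_s$ depends on $\tau$, which is not $\mcal B_s$-measurable for $s<\tau$, so the integral is not an It\^o integral with predictable integrand; equivalently, the \emph{past} $(W_{\tau+u}-W_\tau)_{u\le 0}$ does \emph{not} have the Brownian law when $\tau$ is a genuine stopping time (only the \emph{future} does, by strong Markov). An adversarial $\tau$ can look at the past at whatever scale it likes and select for large drift there---for instance, $\tau=\inf\{n\in\NN: d_0^{(n)}>M\}$ already makes $\E[d_0^2]$ of order $M^2$---so any bound on $\E[d_s^2]$ must depend on $\tau$ in a way you have not controlled. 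And even granting a uniform second-moment bound, your Fubini conclusion is too strong: from $\sup_s \E[d_s^2]\leq C$ one gets only that the \emph{expected} density of $\{s:d_s>M\}$ is $\le C/M^2$, not that $\{s:d_s\le 1\}$ has positive lower density almost surely; a stopping time could concentrate precisely on the exceptional set.

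This is exactly why the paper does \emph{not} attempt to control $d_s$ for a given $\tau$. Instead it proves the much stronger pathwise statement (Proposition~\ref{ppn:toprove-recoded}): for almost every trajectory of $Z$, at \emph{every} time $t\in\RR$ the rescaled drift $(d_s^{(t)})_s$ fails to stay above $\alpha H^{-1/2}\sqrt{\log s}$ on a thick set of scales. Once this is known, any stopping time lands a.s.\ at a non-arbitrage point. The price is that one must handle an uncountable union over $t$; this is the content of \S\ref{sec:Pathwise_control}--\S\ref{sec:Controlling_Gaussian_vector} (regularity of $t\mapsto \hat\Gamma_i(t)$ to discretise the union, then a superexponential Gaussian-vector estimate). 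Your ``mild moment control on $d$'' is precisely the step that cannot be done cheaply; the bulk of the technical work concentrates there, not in the Borel--Cantelli step.
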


\begin{rmk}
As the increments of $Z$ are adapted to the filtration $\Filtration$, obviously in Theorem~\ref{thm:main} we may replace that filtration by the filtration generated by the increments of $Z$.
\end{rmk}

\begin{rmk}
In this article we are only considering the case $H \neq 1/2$, but Theorem~\ref{thm:main} is trivially valid for $H = 1/2$ too, since then the fBm $Z$ is nothing but the ordinary Brownian motion $W$ itself, for which the result follows immediately from the Markov property and the local properties of oBm.
\end{rmk}

Stated informally, Theorem~\ref{thm:main} means that, if you are discovering the trajectory of a fractional Brownian motion along time, you cannot find a time at which you might foresee that the trajectory would go on upwards. So, this is a kind of very weak \quotes{martingale} property for the fBm, showing that the existence of correlations for it does not allow you to make anything yet.

The motivation for Theorem~\ref{thm:main} comes from the article \cite{Bender2012} by C.~Bender, where it is explained that this result would be incompatible with the possibility, for a financial random process undergoing a fractional Brownian motion (or rather an exponential fBm), that it had an opportunity of so-called \quotes{simple arbitrage}. [\emph{cf.} \cite[Prop.~3.3]{Bender2012}]. So, our theorem shows that making an arbitrage on a fBm is necessarily \quotes{complicated}.

\subsection{Outline of the proof}

The sequel of this paper is devoted to proving Theorem~\ref{thm:main}. In \S~\ref{sec:Conditional_future}, we will see how one can get rid of the notion of stopping time to get Theorem~\ref{thm:main} back to a result on the trajectories of the fractional Brownian motion. In \S~\ref{sec:LIL-fBm}, we will make the needed result on fBm's trajectories more precise, by establishing a law of iterated logarithm for some variant of the fBm. Next, an issue will be that we have to control the probability of an event being a union over a continuous infinity of $t$'s: that issue will be handled by \S~\ref{sec:Pathwise_control}, in which we will use regularity estimates on the fBm to get our continuous union back to a finite union. Finally, after all these simplifications it will only remain to prove some estimates on Gaussian vectors, which will be the work of \S~\ref{sec:Controlling_Gaussian_vector}.

Some technical results will be postponed to appendices. In particular, in Appendix~\ref{sec:Kuv} we will compute the precise expression of the \quotes{drift operator} appearing in Lemma~\ref{lem:Drift+Levy} describing what the law of the fBm becomes when you condition it by a stopping time: this formula, though not actually required to prove our main result, looks indeed intrinsically worthy to be written down to my eyes. Also, in Appendix~\ref{sec:thick} we will investigate some basic properties of \quotes{thick} subsets of $\NN$; and in Appendices~\ref{sec:supGauss} and~\ref{sec:alm-diag-mat} we will prove two lemmas on resp.\ the supremum of Gaussian processes and the inverse of nearly diagonal matrices.

\section{Conditional future of the fractional Brownian motion}\label{sec:Conditional_future}

\subsection{Preliminary definitions}

To begin with, it will be convenient to set some notation for certain sets of trajectories:

\begin{dfn}[Sets $\Past$ and $\Future$]\label{dfn:P,F}\strut\par
\begin{itemize}
\item
We denote by $\Past$ [like \quotes{past}] the set of the (deterministic) paths $(X_u)_{u \leq 0}$ such that:
\begin{enumerate}
\item\label{itm:P-zero}
$X_0 = 0$;
\item\label{itm:P-local}
$X$ is locally $(H - \epsilon)$-Hölder for all $\epsilon>0$;
\item\label{itm:P-infty}
$X_u \stackrel{u \to -\infty}{=} O (\abs{u}^{H + \epsilon})$ for all $\epsilon > 0$.
\end{enumerate}
\item Similarly, we denote by $\Future$ [like \quotes{future}] the space of the paths $(X_v)_{v \geq 0}$ satisfying the analogues of conditions \ref{itm:P-zero}–\ref{itm:P-infty} for non-negative times.
\end{itemize}
\end{dfn}

\begin{rmk}
With the notation of Definition \ref{dfn:P,F}, one has almost-surely that, for all $t \in \RR$, $\bigl(Z (\omega)_{t + u} - Z (\omega)_t\bigr)_{u \leq 0} \in \Past$ and $\bigl(Z (\omega)_{t + v} - Z (\omega)_t\bigr)_{v \geq 0} \in \Future$. [\emph{cf.}\ \cite[Prop.~1.6 \& Prop.~2.2.3]{Nourdin-book-fBm}].
\end{rmk}

We also define a certain \quotes{drift operator}:

\begin{dfn}[\quotes{Drift operator} $\Drift$]
Let $\Drift \colon \Past \to \Future$ be the linear operator such that, for $X \in \Past$:
\[\label{DriftAsIntegral.1}
\bigl(\Drift X\bigr)_v \coloneqq \int_{-\infty}^0 K (u, v) X_u \dx{u}
,\]
where
\begin{multline}\label{Kuv}
K (u, v) \coloneqq
\frac{\Hmoh}{\Pi (\Hmoh) \Pi (-\Hmoh)} \times \Bigl\{ \\
\Hmoh \int_{-\infty}^0 \bigl(\1{s > u} \xi_{\Hmoh - 1} (s - u, v) \xi_{-\Hmoh - 1}(-s, s - u) - \xi_{\Hmoh - 1} (-u, v) \xi_{-\Hmoh - 1} (-s, -u)\bigr) \dx{s} \\
- v (v - u)^{\Hmoh - 1} (-u)^{-\Hmoh - 1}\Bigr\}
,\end{multline}
where $\Pi (\bcdot)$ is Euler's pi function extrapolating the factorial, and where we denote, for $r \in \RR$, $a, b > 0$:
\[
\xi_r (a, b) \coloneqq (a + b)^r - a^r
.\]
\end{dfn}

\begin{rmk}
Note that, since $H \in (0, 1)$, the integrals in \eqref{Kuv} and \eqref{DriftAsIntegral.1} do converge (absolutely) indeed, and $\Drift$ is well-defined on the whole $\Past$ with values in $\Future$.
\end{rmk}

\begin{rmk}
The equations \eqref{DriftAsIntegral.1}-\eqref{Kuv} defining $\Drift$, though interesting as such, shall not play an essential role in this article. What is really important to have in mind is the moral \emph{meaning} of this operator: actually $\Drift$ was defined so that, informally,
\[\label{Drift}
(\Drift X)_v = \E \bigl(Z_v\big|\ (Z_u)_{u \leq 0} = (X_u)_{u \leq 0}\bigr)
.\]
The formal meaning of \eqref{Drift} will be made clear by Lemma~\ref{lem:Drift+Levy} below.
\end{rmk}

Finally we define a process called the \quotes{Lévy fractional Brownian motion}, which is a kind of unilateral version of the \quotes{regular} fBm:

\begin{dfn}[Lévy fBm]
If $(W_v)_{v \geq 0}$ is a (unilateral) ordinary Brownian motion, then the process $(Y_v)_{v \geq 0}$ defined by
\[\label{LevyfBm}
Y_v \coloneqq C_1 \int_0^v (v - s)^{\Hmoh} \dx{W}_s
\]
(interpreted \emph{via} the same integration by parts trick as in \eqref{ZofW.0}) (and where we recall that $C_1$ is defined by \eqref{C1}) is called a \emph{Lévy fractional Brownian motion} (with Hurst parameter $H$)—or, more accurately, the law of this process (which \eqref{LevyfBm} defines without ambiguity) is called \quotes{the law of the Lévy fBm}.
\end{dfn}

\begin{rmk}
From the regularity properties of the oBm, it is easy to check that the trajectories of the Lévy fBm lie in $\Future$ a.s..
\end{rmk}

\subsection{Conditioning lemma}\label{sec:Drift+Levy}

Now we can state the key lemma of this section:

\begin{lem}\label{lem:Drift+Levy}
In the context of \S~\ref{sec:Context}, for $\tau$ a stopping time, $\bigl((Z_{\tau + v} - Z_\tau)_{v \geq 0} - \Drift ((Z_{\tau + u} - Z_\tau)_{u \leq 0})\bigr)$ is independent of $\mcal{B}_\tau$, and its law is the Lévy fBm.
\end{lem}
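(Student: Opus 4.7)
The plan is to prove Lemma~\ref{lem:Drift+Levy} first for a deterministic time $\tau = t$, and then to extend to a general stopping time via the strong Markov property of the driving Brownian motion $W$. For deterministic $t$, the key move is to split the integral representation
\[
Z_{t+v} - Z_t = C_1 \int_\RR \bigl((t+v-s)_+^{\Hmoh} - (t-s)_+^{\Hmoh}\bigr) \, dW_s
\]
(interpreted \emph{via} the integration-by-parts trick of \eqref{ZofW.0} as a pathwise Lebesgue integral against $W$) into a \emph{past} contribution $\int_{-\infty}^t$ and a \emph{future} contribution $\int_t^{t+v}$. On $s > t$ the past kernel $(t-s)_+^{\Hmoh}$ vanishes, so after the change of variable $s = t + s'$ the future piece reads $C_1 \int_0^v (v-s')^{\Hmoh}\, d(W_{t+s'} - W_t)$; since $(W_{t+s'} - W_t)_{s' \geq 0}$ is a Brownian motion independent of $\mcal{B}_t$ by the adaptedness and independence properties of $W$, this piece is precisely a Lévy fBm independent of $\mcal{B}_t$. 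The remaining past contribution $N^{(t)}_v := C_1 \int_{-\infty}^t \bigl((t+v-s)^{\Hmoh} - (t-s)^{\Hmoh}\bigr) \, dW_s$ is $\mcal{B}_t$-measurable.

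It then remains to identify $N^{(t)}$ with $\Drift\bigl((Z_{t+u} - Z_t)_{u \leq 0}\bigr)$. Since $N^{(t)}_v$ and the past-increment process $(Z_{t+u} - Z_t)_{u \leq 0}$ are jointly centred Gaussian, and since the residual Lévy fBm piece $Z_{t+v} - Z_t - N^{(t)}_v$ is independent in particular of those past increments, $N^{(t)}_v$ must coincide with the conditional expectation $\E\bigl[Z_{t+v} - Z_t \mid (Z_{t+u} - Z_t)_{u \leq 0}\bigr]$. The explicit kernel $K(u,v)$ in \eqref{Kuv} is constructed in Appendix~\ref{sec:Kuv} precisely so that this conditional expectation reads as $\int_{-\infty}^0 K(u,v) (Z_{t+u} - Z_t)\, du = \Drift\bigl((Z_{t+u} - Z_t)_{u \leq 0}\bigr)_v$, which settles the deterministic case.

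For a general stopping time $\tau$, the natural move is to invoke the strong Markov property of $W$: the shifted process $(W_{\tau+s} - W_\tau)_{s \geq 0}$ is a Brownian motion independent of $\mcal{B}_\tau$, while the truncation $W|_{(-\infty,\tau]}$ is $\mcal{B}_\tau$-measurable. The same past/future split with $t$ replaced by $\tau$ yields $Z_{\tau+v} - Z_\tau = N^{(\tau)}_v + C_1 \int_0^v (v-s')^{\Hmoh}\, d(W_{\tau+s'} - W_\tau)$, where the second term is once again a Lévy fBm independent of $\mcal{B}_\tau$, and the first, being $\mcal{B}_\tau$-measurable, equals $\Drift$ of the past increments by the deterministic case applied conditionally on the value of $\tau$. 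The main obstacle I expect is the careful handling of these stochastic integrals with a random shift, given that they are only conditionally convergent: the cleanest route is probably first to prove the result for simple (finitely-valued) stopping times — where a partition on events $\{\tau = t_i\} \in \mcal{B}_{t_i}$ reduces everything to fixed times — and then to pass to the limit by a dyadic approximation $\tau_n \downarrow \tau$, using continuity of the trajectories of $Z$ together with the continuity of the operator $\Drift$.
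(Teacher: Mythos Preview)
Your proposal is correct and follows essentially the same past/future split of the integral representation as the paper; the only organizational difference is that the paper works directly with the stopping time $\tau$ from the outset (invoking the strong Markov property of $W$ in one line), rather than first treating deterministic times and then approximating. Your worry about handling the integrals under a random shift is unnecessary here, since---as emphasised in the remark around \eqref{ZofW.0}---all the integrals are interpreted pathwise via integration by parts against the continuous trajectory $W$, so no Itô-type machinery or discrete-stopping-time approximation is needed.
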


\begin{rmk}
In other words, Lemma~\ref{lem:Drift+Levy} states that, conditionally to $\mcal{B}_\tau$ (or, morally, knowing the past trajectory of $Z$ until $\tau$), the law of the future trajectory of $Z$ is equal to a \quotes{deterministic} \emph{drift term} $\Drift((Z_{\tau + u} - Z_\tau)_{u \leq 0})$ plus a \quotes{random} \emph{noise term} being a Lévy fBm.
\end{rmk}

\begin{proof}[\proofname\ of Lemma~\ref{lem:Drift+Levy}]
As the increments of $W$ are adapted to $\Filtration$, conditionally to $\mcal{B}_\tau$, the past trajectory $(W_{\tau + u} - W_\tau)_{u \leq 0}$ of (the increments of) $W$ is deterministic, while its future trajectory $(W_{\tau + v} - W_\tau)_{v \geq 0}$ still has the unconditioned law of a standard oBm. Therefore, for $t \geq 0$, we split
\begin{multline}
Z_{\tau + t} - Z_{\tau} \bstackrel{\eqref{ZofdW}}{=} C_1 \int_{s \in \RR} \bigl((\tau + t - s)_+^{\Hmoh} - (\tau - s)_+^{\Hmoh}\bigr) \dx{W}_s \\
\bstackrel{s \leftarrow s - \tau}{=} C_1 \int_{s \in \RR} \bigl((t - s)_+^{\Hmoh} - (-s)_+^{\Hmoh}\bigr) \dx{W}_{\tau + s} \\
= C_1 \int_{u = -\infty}^0 \bigl((t - u)^{\Hmoh} - (-u)^{\Hmoh}\bigr) \dx{W}_{\tau + u} + C_1 \int_{v = 0}^{t} (t - v)^{\Hmoh} \dx{W}_{\tau + v}
,\end{multline}
in which the first term is deterministic and given by some function of $(W_{\tau + u} - W_{\tau})_{u \leq 0}$, while the second term (seen as a trajectory indexed by $t$) has the law of the Lévy fBm indeed.

To end the proof, it remains to show that the aforementioned first term (seen as a trajectory indexed by $t$) is equal to $\Drift((Z_{\tau + u} - Z_\tau)_{u \leq 0})$ indeed. Since this point is actually not needed to prove our main result, we will postpone it to Appendix~\ref{sec:Kuv}.
\end{proof}

\subsection{Reformulation of the main theorem}

Thanks to Lemma~\ref{lem:Drift+Levy}, we will be able to get a sufficient condition for Theorem~\ref{thm:main} in which there are no stopping times any more. For this we need first an \emph{ad hoc} definition:

\begin{dfn}[Arbitrage path]
We say that a deterministic path $(X_u)_{u \leq 0} \in \Past$ is \emph{arbitrage}, and we denote \quotes{$X \in \mcal{A}$}, when, for $Y$ a Lévy fBm:
\[
\Pr \bigl(\text{$0$ is a local minimum at right for $(\Drift X + Y (\omega))\bigr)$} > 0
.\]
\end{dfn}

Now, Theorem~\ref{thm:main} will be a consequence of the following

\begin{ppn}\label{ppn:toprove-recoded}
In the context of \S~\ref{sec:Context}:
\[\label{toprove-recoded}
\Pr \bigl(\exists t \in \RR \quad (Z (\omega)_{t + u} - Z (\omega)_t)_{u \leq 0} \in \mcal{A}\bigr) = 0
.\]
\end{ppn}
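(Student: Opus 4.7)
The plan is to reduce Proposition~\ref{ppn:toprove-recoded} to an almost-sure Gaussian estimate, then to handle the continuous union over $t$ by dyadic discretization. By stationarity of the increments of $Z$ together with countable additivity, it suffices to prove $\Pr(\exists t \in [0,1]:\, X_t \in \mcal{A}) = 0$, where $X_t := (Z_{t+u} - Z_t)_{u \leq 0}$. Since the event defining $\mcal{A}$ lies in the germ $\sigma$-algebra at $0$ of the oBm driving $Y$, a Blumenthal-type $0$--$1$ law shows it has probability $0$ or $1$; hence $X \in \mcal{A}$ iff almost surely $(\Drift X)_v + Y_v \geq 0$ on a right-neighborhood of $0$. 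Combined with the law of iterated logarithm for the Lévy fBm to be established in \S\ref{sec:LIL-fBm}, this turns $\mcal{A}$ into a quantitative growth condition: $X \in \mcal{A}$ forces the existence of a sequence $v_n \downarrow 0$ with $(\Drift X)_{v_n} \geq c_H\, v_n^H \sqrt{\log\log(1/v_n)}$ for some explicit constant $c_H > 0$.

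The task is therefore to prove that almost surely, for every $t \in [0,1]$, $(\Drift X_t)_v = o\bigl(v^H \sqrt{\log\log(1/v)}\bigr)$ as $v \downarrow 0$. For fixed $t$, $(\Drift X_t)_v$ is a centered Gaussian random variable with variance of order $v^{2H}$, so a standard Gaussian tail gives $\Pr\bigl((\Drift X_t)_v \geq c_H v^H \sqrt{\log\log(1/v)}\bigr)$ decaying like $\exp(-c \log\log(1/v))$, which is summable on a dyadic sequence $v = 2^{-j}$; Borel--Cantelli then disposes of any individual $t$. The nontrivial step is extending this to \emph{all} $t \in [0,1]$ simultaneously, which is the purpose of \S\ref{sec:Pathwise_control}: one exploits joint Hölder regularity of the Gaussian field $(t, v) \mapsto (\Drift X_t)_v$ so that a bad event at an arbitrary $(t^*, v^*)$ with $v^* \sim 2^{-j}$ can be detected, with slightly weakened constants, at a nearest point of a dyadic grid of size polynomial in $2^j$. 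The refined Gaussian vector estimates of \S\ref{sec:Controlling_Gaussian_vector} then keep the Borel--Cantelli sum convergent.

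The principal obstacle is the pathwise control step. Since $\Drift X_t$ depends on the full past $(W_s)_{s \leq t}$ via the singular kernel $K(u,v)$ in \eqref{Kuv}, controlling its modulus of continuity in $t$---with an exponent sharp enough that a polynomial-size grid in $t$ suffices---requires a delicate covariance analysis, more intricate than the classical analysis of $Z$ itself. A secondary subtlety is that the Gaussian estimate for a single $(t, v)$ is not, by itself, decaying fast enough to survive a bluntly large union bound, so \S\ref{sec:Controlling_Gaussian_vector} must exploit the decorrelation between $(\Drift X_{t_i})_{v_j}$ at distinct grid points. Once both pieces are in hand, the remainder is a routine Borel--Cantelli.
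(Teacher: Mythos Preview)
Your outline has the right architecture---stationarity to reduce to $t\in[0,1]$, a LIL for the Lévy fBm to turn $\mcal{A}$ into a growth condition on $\Drift X$, then a discretization in $t$---but the quantitative heart of the argument is wrong, and the gap is not repairable without importing the paper's density idea.

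The central error is the claim that, for fixed $t$, almost surely $(\Drift X_t)_v = o\bigl(v^H\sqrt{\log\log(1/v)}\bigr)$. This is false. By scaling, $(\Drift X_t)_{r^i}/r^{Hi} \eqqcolon \Gamma_i$ is a centred Gaussian with \emph{constant} positive variance $\sigma^2$ (cf.\ \eqref{dfnGam}--\eqref{CovGiGj}), and the $\Gamma_i$'s are only weakly correlated. Hence $\limsup_i \Gamma_i/\sqrt{\log i}$ is a.s.\ a positive constant, not zero: the drift lives at \emph{exactly} the LIL scale of the noise $Y$, not below it. Your Borel--Cantelli step reflects this: the tail $\Pr\bigl(\Gamma_i \geq c_H\sqrt{\log i}\bigr)$ is of order $i^{-c_H^2/2\sigma^2}$, which is summable only if $c_H > \sigma\sqrt{2}$, an inequality you have not (and cannot, in general) established; and even when summable it yields an eventual bound with constant $c_H$, not a little-$o$. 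Consequently your characterization ``$X\in\mcal{A}$ forces a sequence $v_n\downarrow 0$ with $(\Drift X)_{v_n}\geq c_H v_n^H\sqrt{\log\log(1/v_n)}$'' is too weak to discriminate: such a sequence exists for \emph{every} $X_t$ almost surely.

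This propagates fatally to the pathwise step. To detect a bad event at scale $v\sim 2^{-j}$ you need a grid in $t$ of spacing comparable to $2^{-j}$ (polynomially many points in $2^j$), but the single-point tail you have is only $j^{-c}$; the product $\mathrm{poly}(2^j)\cdot j^{-c}$ diverges, so no Borel--Cantelli closes. The paper resolves this by replacing ``there exists a sequence'' with a \emph{density} condition: via Lemma~\ref{ppn:A<An}, $\mcal{A}\subset\liminf_n\mcal{A}_n$, where $\mcal{A}_n$ demands that a \emph{proportion} $\geq p$ of the indices $i<n$ satisfy $\Gamma_i\geq \alpha H^{-1/2}\sqrt{\log i}$. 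The probability of this (event $A_n$) is then shown in \S\ref{sec:Controlling_Gaussian_vector} to decay \emph{superexponentially} in $n$---essentially $\prod_i i^{-c}\sim (pn)!^{-c}$ rather than a single $n^{-c}$---and it is this superexponential rate that beats the exponential grid size $\tilde r^{-n}$ in \eqref{(1/Tn)cases}. Your sketch of \S\ref{sec:Controlling_Gaussian_vector} as exploiting ``decorrelation at distinct grid points'' misses that the decorrelation is used across \emph{scales} $i$, not across $t$-grid points, precisely to manufacture this factorial gain.
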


\begin{proof}[\proofname\ of Theorem~\ref{thm:main} from Proposition~\ref{ppn:toprove-recoded}]
Let $\tau$ be any stopping time. Then, writing the law of total probability w.r.t.\ the $\sigma$-algebra $\mcal{B}_\tau$:
\begin{multline}
\Pr (\text{$\tau (\omega)$ is a local minimum at right for $Z (\omega)$}) \\
= \E \bigl(\Pr \bigl(\text{$\tau$ is a l.m.a.r.\ for $Z (\omega')$}\big|\ \mcal{B}_{\tau}\bigr) (\omega)\bigr) \footnotemark \\
= \E \bigl(\Pr \bigl(\text{$0$ is a l.m.a.r.\ for $(Z (\omega')_{\tau + v} - Z (\omega')_\tau)_{v \geq 0}$}\big|\ \mcal{B}_{\tau}\bigr) (\omega)\bigr) \\
\bstackrel{\text{Lem.~\ref{lem:Drift+Levy}}}{=} \E \bigl(\Pr \bigl(\text{$0$ is a l.m.a.r.\ for $\bigl(\Drift ((Z_{\tau + u} - Z_\tau)_{u \leq 0})$} + Y (\omega')\bigr)\bigr) (\omega)\bigr) \\
= \E \bigl(\text{$0$ whenever $((Z (\omega)_{\tau + u} - Z (\omega)_\tau)_{u \leq 0}) \notin \mcal{A}$}\bigr)
\bstackrel{\text{Prop.~\ref{ppn:toprove-recoded}}}{=} 0
,\end{multline}
\footnotetext{In this computation I am using notation \squotes{$\omega'$} when referring to an eventuality for the conditional law $\Pr (\bcdot|\ \mcal{B}_\tau)$, while \squotes{$\omega$} is reserved to eventualities for the unconditioned law.}
so that $\tau$ is not arbitrage.
\end{proof}

So, in the sequel, our new goal will be to prove Proposition~\ref{ppn:toprove-recoded}.

\section{Local behaviour of fBm's trajectories}\label{sec:LIL-fBm}

\subsection{A law of the iterated logarithm for the Lévy fBm}

\begin{NB}
In all this article, we denote $\integers{n} \coloneqq \NN \cap [0, n) = \{0,\ab 1,\ab 2, \ldots,\ab n - 1\}$. A subset $\mcal{I} \subset \NN$ will be said to be \emph{thick} when it has positive upper asymptotic density:
\[
\limsup_{n\to\infty} \frac{\abs{\mcal{I} \cap \integers{n}}}{n} > 0
.\]
A few basic results on thick subsets of $\NN$ are gathered in Appendix \ref{sec:thick}.
\end{NB}

The first main result of this section is the following
\begin{lem}\label{lem:LIL}
Let $r \in (0, 1)$ and let $\mcal{I}$ be a thick subset of $\NN$; then, for $Y$ a Lévy fBm:
\[\label{LIL-LfBm}
\liminf_{\substack{i \in \mcal{I}\\ i \to \infty}} \frac{Y_{r^i}}{(\log i)^{1/2} r^{H i}} = -H^{-1/2} \quad \text{a.s.}
.\]
\end{lem}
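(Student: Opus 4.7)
The lemma states a law of the iterated logarithm for the Lévy fBm $Y$ sampled at the geometric scales $(r^i)_{i \in \NN}$ and restricted to a thick index set $\mcal{I}$. The starting point is that each $Y_{r^i}$ is a centred Gaussian with variance $\sigma_i^2 = C_1^2 r^{2Hi}/(2H)$ (by Itô isometry applied to~\eqref{LevyfBm}); thus the difficulty lies not in the individual marginals but in the dependence structure of the family $(Y_{r^i})_i$.

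For the easy direction $\liminf \geq -H^{-1/2}$ a.s., I would use the Gaussian tail bound $\Pr(G \leq -t) \leq e^{-t^2/2}$ on $Y_{r^i}/\sigma_i$. For any fixed $a > H^{-1/2}$ this yields $\Pr(Y_{r^i} \leq -a r^{Hi}\sqrt{\log i}) \leq i^{-\alpha}$ with some $\alpha > 1$, so the series is summable and the first Borel--Cantelli lemma (no independence needed) shows the event occurs finitely often, in particular along $\mcal{I}$.

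For the hard direction $\liminf \leq -H^{-1/2}$, which requires both independence and thickness, I would apply a multi-scale decomposition with a free parameter $N \in \NN$: write $Y_{r^i} = M_i^{(N)} + R_i^{(N)}$ where
\[
M_i^{(N)} \coloneqq C_1 \int_{r^{i+N}}^{r^i}(r^i - s)^{\Hmoh}\dx{W}_s, \qquad R_i^{(N)} \coloneqq C_1 \int_0^{r^{i+N}}(r^i - s)^{\Hmoh}\dx{W}_s.
\]
Two key observations: (i) whenever $\abs{i-j} \geq N$ the integration intervals of $M_i^{(N)}$ and $M_j^{(N)}$ are disjoint, so these variables are independent; (ii) a direct computation yields $\var(M_i^{(N)}) = (1 - r^{2NH})\sigma_i^2$ and $\var(R_i^{(N)}) = r^{2NH}\sigma_i^2$, so the independent piece carries essentially the full variance and the remainder becomes negligible once $N$ is large. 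Using Appendix~\ref{sec:thick}, I would extract from $\mcal{I}$ a sub-subsequence $\mcal{I}'$ with consecutive gaps $\geq N$ that remains thick; on $\mcal{I}'$ the $M_i^{(N)}$ are mutually independent. For any $a' < H^{-1/2}$ and $N$ large enough, the Gaussian lower tail gives $\Pr(M_i^{(N)} < -a' r^{Hi}\sqrt{\log i}) \asymp i^{-\alpha'}$ with $\alpha' < 1$; the sum over $\mcal{I}'$ diverges (by positive upper density), and the second Borel--Cantelli lemma---now legitimately applicable---produces $M_i^{(N)} < -a' r^{Hi}\sqrt{\log i}$ for infinitely many $i \in \mcal{I}'$ almost surely. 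Separately, a first Borel--Cantelli argument on the remainder gives $\abs{R_i^{(N)}} \leq b r^{Hi}\sqrt{\log i}$ eventually, for any prescribed $b > 0$, once $N$ is large enough. Combining, $Y_{r^i} \leq -(a' - b)\, r^{Hi}\sqrt{\log i}$ for infinitely many $i \in \mcal{I}$, and letting $(a', b) \to (H^{-1/2}, 0)$ along a countable sequence concludes.

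The main obstacle is the calibration between the independent and remainder parts: a naive one-step decomposition $(N = 1)$ would capture only the fraction $1 - r^{2H}$ of the variance in the independent piece, yielding a strictly suboptimal constant. The key point is that $N$ may be taken as large as we like, trading independence range against remainder size; sending $N \to \infty$ makes the decomposition asymptotically tight and allows the constants in the two directions to match exactly.
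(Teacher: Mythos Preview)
Your argument is correct and uses the same ingredients as the paper---a split of $Y_{r^i}$ into an ``independent'' piece and a ``remainder'', second Borel--Cantelli on the former, first Borel--Cantelli on the latter, and the thick-subset machinery of Appendix~\ref{sec:thick}---but the organization differs. The paper first takes $N=1$ (your $M_i^{(1)}=\tilde Y_{r^i}$, which are independent for \emph{all} $i$), obtaining the suboptimal constant $\lambda(r)=\bigl((1-r)^H-(1-(1-r)^{2H})^{1/2}\bigr)H^{-1/2}$; it then sharpens the constant by exploiting scale invariance of the L\'evy fBm, applying the already-proved estimate with $r$ replaced by $r^k$ on a thick arithmetic sub-progression (Lemma~\ref{lem:2179}) and letting $k\to\infty$. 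Unwound, this is exactly your $N$-step decomposition on the subset $\{kj+l\}$, so the two routes are equivalent; yours is more direct, while the paper's packages the bootstrap as a clean appeal to self-similarity.

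One minor correction: your variance formulas are off. A direct computation gives
\[
\var\bigl(M_i^{(N)}\bigr)=(1-r^{N})^{2H}\,\sigma_i^2,\qquad \var\bigl(R_i^{(N)}\bigr)=\bigl(1-(1-r^{N})^{2H}\bigr)\,\sigma_i^2,
\]
not $(1-r^{2NH})\sigma_i^2$ and $r^{2NH}\sigma_i^2$. This is harmless for your argument, since all that matters is that the first tends to $\sigma_i^2$ and the second to $0$ as $N\to\infty$, which remains true.
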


\begin{rmk}
Actually only the \squotes{$\leq$} sense of \eqref{LIL-LfBm} (which is the harder one) will be needed in this article.
\end{rmk}

\begin{proof}[\proofname\ of Lemma \ref{lem:LIL}]
It will be convenient in this proof to assume that $Y$ is driven by some oBm $W$ according to \eqref{LevyfBm}. Then, for $v \geq 0$, let us define
\[
\tilde{Y}_v \coloneqq \int_{r v}^v (v - s)^{\Hmoh} \dx{W}_s
,\]
resp.
\[
Y'_v \coloneqq Y_v - \tilde{Y}_v = \int_0^{r v} (v - s)^{\Hmoh} \dx{W}_s
.\]

First let us study the $\tilde{Y}_{r^i}$'s. Obviously these random variables are independent, with $Y_{r^i} \div r^{H i} \sim \mcal{N} \bigl(0, (1 - r)^{2 H} \div 2 H\bigr)\ab\ \forall i$. Now, using that $\Pr \bigl(\mcal{N} (0, 1) \leq -x\bigr) \geq e^{-x^2 \div 2} \div 2 \sqrt{2 \pi} x$ for $x \geq 1$,%
\footnote{This is because of convexity of the density $y \mapsto \phi (y) \coloneqq e^{-y^2 \div 2} \div \sqrt{2 \pi}$ on $(-\infty, -1]$: from this property you deduce that $\int_{-\infty}^{-x} \phi (y) \dx{y} \geq \phi (-x)^2 \div 2 \phi' (-x) = \phi (-x) \div 2 x$.}
we get that for $i$ large enough: (having fixed some arbitrary small $\epsilon \in (0, 1)$),
\begin{multline}
\Pr \bigl(\tilde{Y}_{r^i} \div (\log i)^{1/2} r^{H i} \leq -(1 - \epsilon) (1 - r)^H H^{-1/2}\bigr) \\
= \Pr \bigl(\mcal{N} (0, 1) \leq -(1 - \epsilon) \sqrt{2} (\log i)^{1/2}\bigr) \\
\geq i^{-(1 - \epsilon)^2} \div (1 - \epsilon) 4\sqrt{\pi} (\log i)^{1/2}
\stackrel{i\to\infty}{=} \Omega(i^{-1})
,\end{multline}
where \quotes{$f (i) = \Omega (g (i))$} means that $g (i) = O (f (i))$.
As $\mcal{I}$ is thick, the series $\sum_{i\in\mcal{I}} i^{-1}$ is divergent (\emph{cf.}\ Lemma \ref{lem:harmonicthick} in Appendix~\ref{sec:thick}), thus so is
\[
\sum_{i\in\mcal{I}} \Pr \biggl(\frac{\tilde{Y}_{r^i}}{(\log i)^{1/2} r^{H i}} \leq -(1-\epsilon) (1-r)^H H^{-1/2} \biggr) .
\]
Since the events concerning the different $\tilde{Y}_{r^i}$'s are independent, it follows by the (second) Borel–Cantelli lemma that almost-surely there are infinitely many $i \in \mcal{I}$ for which $\tilde{Y}_{r^i} \div (\log i)^{1/2} r^{H i} \leq -(1 - \epsilon) (1 - r)^H H^{-1/2}$, so that:
\[\label{liminfYtilde}
\liminf_{\substack{i \in \mcal{I}\\ i \to \infty}} \frac{\tilde{Y}_{r^i}}{(\log i)^{1/2} r^{H i}} \leq -(1 - \epsilon) (1 - r)^H H^{-1/2} \quad \text{a.s.}
,\]
in which the factor $(1 - \epsilon)$ may be removed by letting $\epsilon \to 0$.

Now let us handle the $Y'_{r^i}$'s. One has $Y'_{r^i} \div r^{H i} \sim \mcal{N} \bigl(0, 1 - (1 - r)^{2H} \div 2H\bigr)$; therefore, using that $\Pr \bigl(\mcal{N} (0, 1) \geq x\bigr) \leq e^{-x^2 \div 2}$ for all $x$,%
\footnote{This is because $\E (e^{x \mcal{N} (0, 1)}) = e^{x^2 \div 2}$, from which the claimed formula follows by Markov's inequality.}
we get that: (having fixed some arbitrary small $\epsilon > 0$),
\begin{multline}
\Pr \bigl(Y'_{r^i} \div (\log i)^{1/2} r^{H i} \geq (1 + \epsilon) \bigl(1 - (1 -r)^{2 H}\bigr)^{1/2} H^{-1/2}\bigr) \\
= \Pr\bigl(\mcal{N} (0, 1) \geq (1 + \epsilon) \sqrt{2} (\log i)^{1/2}\bigr)
\leq i^{-(1+\epsilon)^2}
.\end{multline}
The series $\sum_{i \in \mcal{I}} i^{-(1 + \epsilon)^2}$ is convergent since $\sum_{i\in\NN} i^{-(1+\epsilon)^2}$ is, thus so is
\[
\sum_{i \in \mcal{I}} \Pr \biggl(\frac{Y'_{r^i}}{(\log i)^{1/2} r^{H i}} \geq (1 + \epsilon) \bigl(1 - (1 - r)^{2H}\bigr)^{1/2} H^{-1/2}\biggr)
.\]
It follows by the (first) Borel–Cantelli lemma that almost-surely there are only finitely many $i$'s for which $\tilde{Y}_{r^i} \div (\log i)^{1/2}r^{H i} \geq (1+\epsilon) (1-(1-r)^{2H})^{1/2} H^{-1/2}$, so that:
\[\label{limsupYprime}
\limsup_{\substack{i \in \mcal{I}\\ i \to \infty}} \frac{Y'_{r^i}}{(\log i)^{1/2} r^{H i}} \leq (1 + \epsilon) \bigl(1-(1-r)^{2H}\bigr)^{1/2} H^{-1/2} \quad \text{a.s.}
,\]
in which the factor $(1+\epsilon)$ may be removed by letting $\epsilon \to 0$.

Summing \eqref{liminfYtilde} and \eqref{limsupYprime}, we get an intermediate result:

\begin{ppn}\label{ppn:2513}
Under the assumptions of Lemma~\ref{lem:LIL}, almost-surely:
\[\label{2513}
\liminf_{i \in \mcal{I}} \frac{Y_{r^i}}{(\log i)^{1/2} r^{H i}} \leq -\lambda(r) ,
\]
where
\[
\lambda(r) \coloneqq \bigl((1 - r)^H - \bigl(1 - (1 - r)^{2 H}\bigr)^{1/2}\bigr) H^{-1/2}
.\]
\end{ppn}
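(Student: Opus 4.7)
The plan is to simply combine the two partial bounds \eqref{liminfYtilde} and \eqref{limsupYprime} already established earlier in the proof of Lemma~\ref{lem:LIL}, by exploiting the pathwise decomposition $Y_v = \tilde Y_v + Y'_v$.

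Concretely, I would divide the identity $Y_{r^i} = \tilde Y_{r^i} + Y'_{r^i}$ by $(\log i)^{1/2} r^{H i}$ and then invoke the elementary real-analysis inequality
\[
\liminf_i (a_i + b_i) \leq \liminf_i a_i + \limsup_i b_i
\]
(valid for any two real sequences as soon as the right-hand side is defined), applied to the subsequence indexed by $i \in \mcal{I}$, with $a_i = \tilde Y_{r^i}/((\log i)^{1/2} r^{H i})$ and $b_i = Y'_{r^i}/((\log i)^{1/2} r^{H i})$. On the full-measure event where both \eqref{liminfYtilde} and \eqref{limsupYprime} are valid, this yields at once
\[
\liminf_{\substack{i \in \mcal{I} \\ i \to \infty}} \frac{Y_{r^i}}{(\log i)^{1/2} r^{H i}} \leq -(1 - r)^H H^{-1/2} + \bigl(1 - (1 - r)^{2H}\bigr)^{1/2} H^{-1/2} = -\lambda(r),
\]
which is exactly the claim.

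I do not expect any real obstacle at this combination step: the substantive work has been done in proving the two one-sided bounds separately. The bound \eqref{liminfYtilde} was the hard one, since it required the \emph{independence} of the $\tilde Y_{r^i}$'s — a feature stemming from the fact that they are stochastic integrals of $W$ over the pairwise disjoint intervals $[r^{i+1}, r^i]$ — in order to apply the second Borel--Cantelli lemma, together with the divergence of $\sum_{i \in \mcal{I}} i^{-1}$ furnished by thickness of $\mcal{I}$ via Lemma~\ref{lem:harmonicthick}. The complementary bound \eqref{limsupYprime} is much softer: being a one-sided control of a limsup, it does not require independence and is a direct consequence of the first Borel--Cantelli lemma applied to the (non-independent) Gaussian variables $Y'_{r^i}$. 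The step that I am proposing here is purely a one-line assembly of these two ingredients.
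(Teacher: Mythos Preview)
Your proposal is correct and is exactly the paper's approach: the paper literally writes ``Summing \eqref{liminfYtilde} and \eqref{limsupYprime}, we get an intermediate result'' and states Proposition~\ref{ppn:2513}. Your only addition is making explicit the elementary inequality $\liminf_i(a_i+b_i)\leq\liminf_i a_i+\limsup_i b_i$ that justifies the word ``summing''.
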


So, now it remains to improve the constant $\lambda (r)$ in \eqref{2513} into $H^{-1/2}$. For this, we begin with observing that  the Lévy fBm is scale invariant with exponent $H$ (by which I mean that for $a \in \RR_+^*$, $(Y_{av} \div a^H)_{v \geq 0}$ is also a Lévy fBm); therefore, Proposition~\ref{ppn:2513} has the following
\begin{cor}\label{cor:0363}
Under the assumptions of Lemma~\ref{lem:LIL}, for $a \in \RR_+^*$, one has almost-surely:
\[
\liminf_{i \in \mcal{I}} \frac{Y_{a r^i}}{a^H (\log i)^{1/2} r^{H i}} \leq -\lambda(r)
.\]
\end{cor}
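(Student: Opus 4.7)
The plan is to apply Proposition~\ref{ppn:2513} not to $Y$ directly, but to a rescaled version of $Y$, exploiting the scale invariance of the Lévy fBm that is explicitly recalled just before the statement. Concretely, for the fixed $a \in \RR_+^*$, I would introduce the auxiliary process
\[
\tilde Y_v \coloneqq a^{-H} Y_{a v}, \qquad v \geq 0,
\]
and observe that by scale invariance $\tilde Y$ is again a Lévy fBm (with the same Hurst parameter $H$).

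Next I would apply Proposition~\ref{ppn:2513} to $\tilde Y$ with the same $r \in (0,1)$ and the same thick set $\mcal{I}$: this is legitimate because the hypotheses of that proposition concern only the law of the process, which is unchanged. One obtains
\[
\liminf_{\substack{i \in \mcal{I} \\ i \to \infty}} \frac{\tilde Y_{r^i}}{(\log i)^{1/2} r^{H i}} \leq -\lambda(r) \quad \text{a.s.}
\]
Substituting back $\tilde Y_{r^i} = a^{-H} Y_{a r^i}$ and multiplying numerator and denominator by $a^H$ yields exactly the inequality of Corollary~\ref{cor:0363}.

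There is no real obstacle here: the statement is a trivial rescaling consequence of Proposition~\ref{ppn:2513}. The only thing one might want to double-check is that the scale invariance $(Y_{av}/a^H)_{v\geq 0} \stackrel{\law}{=} (Y_v)_{v\geq 0}$ indeed follows from the integral representation \eqref{LevyfBm}, which is immediate by the change of variables $s \leftarrow s/a$ together with the Brownian scaling $(W_{as}/a^{1/2})_s \stackrel{\law}{=} (W_s)_s$; since the left-hand side of the asserted $\liminf$ inequality depends on $Y$ only through its law, the reduction to Proposition~\ref{ppn:2513} is complete.
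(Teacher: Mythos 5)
Your proof is correct and is essentially the same argument the paper uses: the paper derives Corollary~\ref{cor:0363} from Proposition~\ref{ppn:2513} in one line by invoking the scale invariance $(Y_{av}/a^H)_{v\geq 0}\stackrel{\law}{=}(Y_v)_{v\geq 0}$, and your write-up simply makes that rescaling explicit.
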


Now let $k > 1$ be an arbitrary large integer; and take $l \in \integers{k}$ such that $\mcal{J} \coloneqq \{j \in \NN|\ \ab k j + l \in \mcal{I}\}$ is thick—existence of such an $l$ is guaranteed by Lemma~\ref{lem:2179} in Appendix~\ref{sec:thick}. Then one has:
\begin{multline}\label{3923}
\liminf_{i\in\mcal{I}} \frac{Y_{r^i}}{(\log i)^{1/2} r^{H i}} \leq
\liminf_{j\in\mcal{J}} \frac{Y_{r^{k j+l}}} {\bigl(\log (k j+l)\bigr)^{1/2} r^{H(k j+l)}} \\
= \liminf_{j\in\mcal{J}} \frac{Y_{r^l(r^k)^j}} {(r^l)^H (\log j)^{1/2} (r^k)^{H j}}
,\end{multline}
where in the last equality we used that $\bigl(\log(k j + l)\bigr)^{1/2} \stackrel{j \to \infty}{\sim} (\log j)^{1/2}$. But, applying Corollary \ref{cor:0363} with $\text{\squotes{$r$}} = r^k$, $\text{\squotes{$a$}} = r^l$ and $\text{\squotes{$\mcal{I}$}} = \mcal{J}$, the r-h.s.\ of \eqref{3923} is bounded above by $-\lambda(r^k)$; so,
\[
\liminf_{i \in \mcal{I}} \frac{Y_{r^i}}{(\log i)^{1/2} r^{H i}} \leq -\lambda(r^k) .
\]
Letting $k$ tend to infinity, $\lambda (r^k)$ tends to $H^{-1/2}$, which proves the \squotes{$\leq$} sense of \eqref{LIL-LfBm}.

For the \squotes{$\geq$} sense, it is the same reasoning as for deriving \eqref{limsupYprime}, just replacing \quotes{$Y'$} by \quotes{$-Y$} and \quotes{$(1 - (1 - r)^{2 H}) \div 2 H$} by \quotes{$1 \div 2 H$}.
\end{proof}

\subsection{Arbitrage condition as a limit}

For all the sequel of this article, we fix some $r \in (0, 1)$ small enough (in a sense to be made precise later); we also fix arbitrarily two parameters $\alpha \in (0, 1)$ and $p \in (0, 1)$. Then we define, for all $n>0$:
\[\label{dfnAn}
\mcal{A}_n \coloneqq \bigl\{(X_u)_{u \leq 0} \in \Past\big|\ \card \{i\in\integers{n}|\ (\Drift X)_{r^i} \geq \alpha H^{-1/2} (\log i)_+^{1/2} r^{H i}\} \geq p n \bigr\}
.\]

Then we have the following connection between $\mcal{A}$ and the $\mcal{A}_n$'s:
\begin{lem}\label{ppn:A<An}
\[
\mcal{A} \subset \liminf_{n \in \NN} \mcal{A}_n .
\]
\end{lem}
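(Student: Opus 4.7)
My approach would be contraposition: I would show that if $X \notin \liminf_n \mcal{A}_n$, then $X \notin \mcal{A}$. To begin, let $\mcal{J} \coloneqq \{i \in \NN : (\Drift X)_{r^i} < \alpha H^{-1/2} (\log i)_+^{1/2} r^{H i}\}$ be the complement within $\NN$ of the ``good'' set of indices appearing in the definition~\eqref{dfnAn} of $\mcal{A}_n$. The hypothesis that $X \notin \mcal{A}_n$ for infinitely many $n$ translates directly into $\abs{\mcal{J} \cap \integers{n}} > (1 - p) n$ for those same $n$, so that $\mcal{J}$ has upper asymptotic density at least $1 - p > 0$; in particular, $\mcal{J}$ is thick.

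Next I would invoke Lemma~\ref{lem:LIL} along the thick set $\mcal{J}$: almost surely, $\liminf_{i \in \mcal{J},\, i \to \infty} Y_{r^i} / \bigl((\log i)^{1/2} r^{H i}\bigr) = -H^{-1/2}$. Since $\alpha < 1$, this guarantees that with probability one there exist infinitely many $i \in \mcal{J}$ with $Y_{r^i} < -\alpha H^{-1/2} (\log i)^{1/2} r^{H i}$. For each such $i$ (large enough that $\log i > 0$, so that the positive-part subscript becomes transparent), adding this inequality to the one defining $\mcal{J}$ yields $(\Drift X)_{r^i} + Y_{r^i} < 0$, while $(\Drift X)_0 + Y_0 = 0$.

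Since $r^i \to 0$ along this sequence, the existence of such indices rules out any $\epsilon > 0$ for which $(\Drift X)_v + Y_v \geq 0$ throughout $[0, \epsilon]$; hence $0$ is a.s.\ not a local minimum at right for $\Drift X + Y$, meaning that $\Pr\bigl(0 \text{ is l.m.a.r.\ for } \Drift X + Y\bigr) = 0$ and therefore $X \notin \mcal{A}$. The whole argument reduces to a single application of Lemma~\ref{lem:LIL}, and the only delicate point—the reason all parameters line up—is that the LIL delivers the \emph{exact} constant $-H^{-1/2}$ along any thick subset, leaving a strict margin over the cushion factor $-\alpha H^{-1/2}$ with $\alpha < 1$; without this exact identification the strict inequality $(\Drift X)_{r^i} + Y_{r^i} < 0$ could not be produced, and the contrapositive would fail.
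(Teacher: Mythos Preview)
Your proof is correct and follows essentially the same contrapositive route as the paper: define the set of ``bad'' indices, observe it is thick because $X\notin\mcal{A}_n$ for unboundedly many $n$, apply Lemma~\ref{lem:LIL} along that thick set, and combine with the defining inequality to force $(\Drift X+Y)_{r^i}<0$ along a sequence $r^i\to0$. If anything, your phrasing is slightly cleaner than the paper's: where the paper writes ``summing'' two $\liminf$ inequalities (which is not literally valid for two $\liminf$'s), you instead use directly that the drift bound holds for \emph{every} $i\in\mcal{J}$ while the LIL gives infinitely many $i\in\mcal{J}$ with $Y_{r^i}<-\alpha H^{-1/2}(\log i)^{1/2}r^{Hi}$, so the sum is negative at those indices.
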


\begin{proof}
We prove the contrapositive inclusion. Let $(X_u)_{u \leq 0} \in \Past$ be such that $X \notin \liminf\{\mcal{A}_n\}$, that is, the set $\{n|\ X \notin \mcal{A}_n\}$ is unbounded; and set
\[
\mcal{I} \coloneqq \{i \in \NN|\ (\Drift X)_{r^i} < \alpha H^{-1/2} (\log i)_+^{1/2} r^{H i}\}
.\]
One has by definition that $\abs{\mcal{I} \cap \integers{n}} \div n \geq 1 - p$ for all $n$ such that $X \notin \mcal{A}_n$; as these $n$ are unbounded and $1 - p > 0$, it follows that $\mcal{I}$ is thick.
Therefore, Lemma \ref{lem:LIL} gives that for almost-all Lévy fBm $Y (\omega)$, one has that
\[\label{3332}
\liminf_{i \in \mcal{I}} \frac{Y (\omega)_{r^i}}{(\log i)^{1/2} r^{H i}} \leq -H^{-1/2}
.\]
On the other hand, the definition of $\mcal{I}$ obviously implies that
\[\label{3333}
\liminf_{i \in \mcal{I}} \frac{(\Drift X)_{r^i}}{r^{H i} (\log i)^{1/2}} \leq \alpha H^{-1/2}
.\]
Summing \eqref{3332} and \eqref{3333}, it follows that almost-surely:
\[\label{2859}
\liminf_{i\in\mcal{I}} \frac{(\Drift X + Y (\omega))_{r^i}}{(\log i)^{1/2} r^{H i}} \leq -(1 - \alpha) H^{-1/2} < 0
.\]
But \eqref{2859} implies that $(\Drift X + Y (\omega))_{r^i}$ is negative for values of $r^i$ arbitrarily close to $0$, so that $0$ is almost-surely not a local minimum at right for $(\Drift X + Y (\omega))$; therefore $X \notin \mcal{A}$, which is what we wanted.
\end{proof}

\subsection{Second reformulation of the main theorem}

Thanks to the work of this section, we are now able to show that the following result will be a sufficient condition for Proposition~\ref{ppn:toprove-recoded}:

\begin{ppn}\label{ppn:toprove-01An}
In the context of \S~\ref{sec:Context}:
\[\label{toprove-01An}
\Pr \bigl(\exists t \in [0, 1] \quad (Z (\omega)_{t + u} - Z (\omega)_t)_{u \leq 0} \in \mcal{A}_n\bigr) \stackrel{n \to \infty}{\to} 0
.\]
\end{ppn}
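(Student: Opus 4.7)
The plan is threefold: (i)~for fixed $t \in [0,1]$, bound $\Pr(X^t \in \mcal{A}_n)$ by $e^{-cn}$ via Gaussian vector estimates, (ii)~discretise $t \in [0,1]$ on a grid using the regularity of $Z$, and (iii)~conclude by a union bound over the grid.

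For step~(i), set $X^t \coloneqq (Z_{t+u}-Z_t)_{u\leq 0}$ and $D_i^t \coloneqq (\Drift X^t)_{r^i}$. By Lemma~\ref{lem:Drift+Levy}, $D_i^t = (Z_{t+r^i}-Z_t) - Y^{(t)}_{r^i}$ with $Y^{(t)}$ an independent Lévy fBm; hence $D_i^t$ is centred Gaussian with variance $\sigma^2 r^{2Hi}$, where $\sigma^2 \coloneqq 1 - C_1^2/(2H) \in (0,1)$ depends only on $H$. After normalising, the condition $X^t \in \mcal{A}_n$ becomes: the centred Gaussian vector $\widetilde D \coloneqq \bigl(D_i^t/(\sigma r^{Hi})\bigr)_{i<n}$ satisfies $\widetilde D_i \geq \kappa\sqrt{(\log i)_+}$ for at least $pn$ indices $i < n$, with $\kappa \coloneqq \alpha/(\sigma\sqrt H)$. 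The per-index probability is $\sim i^{-\kappa^2/2}$, so the expected fraction of satisfied indices tends to zero; provided the covariance matrix of $\widetilde D$ is close enough to the identity---which is where the small-$r$ hypothesis enters---a Gaussian deviation estimate should yield $\Pr(X^t \in \mcal{A}_n) \leq e^{-cn}$ for some $c = c(r,\alpha,p,H) > 0$. This will be the role of \S~\ref{sec:Controlling_Gaussian_vector}, where the almost-diagonality of $\cov(\widetilde D)$ is established with the help of Appendix~\ref{sec:alm-diag-mat}.

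For step~(ii), let $N = N(n)$ be a grid size (exponential in $n$, e.g.\ polynomial in $r^{-n}$) and set $T_N \coloneqq \{k/N : 0 \leq k \leq N\}$. Using the integral representation \eqref{DriftAsIntegral.1}--\eqref{Kuv} of $\Drift$, the Hölder-$(H-\epsilon)$ regularity of the paths of $Z$, and suprema bounds for the relevant Gaussian processes (Appendix~\ref{sec:supGauss}), one shows that outside an event of probability $o(1)$: if $X^t \in \mcal{A}_n$ for some $t \in [0,1]$, then $X^{t^*} \in \mcal{A}_n^+$ at the nearest grid point $t^* \in T_N$, where $\mcal{A}_n^+$ is a mild thickening of $\mcal{A}_n$ (for instance, with $\alpha$ and $p$ slightly reduced) for which the exponential bound of step~(i) still applies. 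Concretely, this amounts to a uniform modulus-of-continuity estimate for the map $t \mapsto (D_i^t)_{i<n}$, via analysis of the kernel $K(u,v)$ both near $u=0$ (local behaviour) and in the tail.

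Combining steps~(i) and~(ii),
\[
\Pr\bigl(\exists t \in [0,1] \quad X^t \in \mcal{A}_n\bigr) \leq o(1) + \abs{T_N} \cdot e^{-cn} = o(1) + N(n) e^{-cn} \stackrel{n \to \infty}{\to} 0,
\]
provided $c$ exceeds the exponential growth rate of $N(n)$---a condition to be secured by suitable choice of $\alpha$, $p$, and $r$ in the definition \eqref{dfnAn}. The principal obstacle is clearly step~(i): the scales $r^i$ give rise to a Gaussian vector $\widetilde D$ with non-negligible cross-scale correlations, and producing an exponentially sharp deviation bound (sharp enough to beat an exponentially large grid) requires careful spectral control of $\cov(\widetilde D)$, which the almost-diagonal structure in the small-$r$ regime provides.
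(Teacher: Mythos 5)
Your outline follows the paper's proof in all essentials: a pointwise Gaussian-vector bound exploiting the almost-diagonal covariance of the rescaled drifts $(\Drift X^t)_{r^i} \div r^{H i}$ for $r$ small (the paper's \S~\ref{sec:Controlling_Gaussian_vector} and Appendix~\ref{sec:alm-diag-mat}), a discretisation of $[0,1]$ into exponentially many intervals of length $T_n = \tilde{r}^n$ combined with a modulus-of-continuity estimate transferring membership in $\mcal{A}_n$ at an arbitrary $t$ to membership in a relaxed $\mcal{A}'_n$ (your $\mcal{A}_n^+$) at the interval's endpoint (the paper's Lemmas~\ref{lem:regGamhat} and~\ref{lem:barAn-An'}), and a final union bound; your variance computation $\sigma^2 = 1 - C_1^2 \div 2H > 0$ also matches the paper's. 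The one place where your outline is shakier than it needs to be is the claimed rate $e^{-c n}$ in step~(i) together with the proviso that $c$ must exceed the growth rate of $N(n)$: with a merely exponential pointwise bound this is delicate, since shrinking $r$ to improve decorrelation simultaneously inflates the grid ($N(n) \sim r^{-Cn}$), and there is a further factor $2^n$ you do not account for, coming from the union over \emph{which} subset of $\geq p n$ indices realises the exceedances. The paper removes any need for tuning by proving a \emph{superexponential} pointwise bound: after the density comparison with i.i.d.\ Gaussians, the product of per-index tails is at most $\bigl((\ceil{p n} - 1)_+!\bigr)^{-C^2 \div 2}$, and this factorial decay beats the $2^n$ subsets and the $\tilde{r}^{-n}$ grid simultaneously. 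Your own per-index estimate $\sim i^{-\kappa^2 \div 2}$ already yields this factorial bound when multiplied over $\geq p n$ indices, so you should claim that rather than $e^{-c n}$; with that upgrade your argument closes exactly as the paper's does.
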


\begin{proof}[\proofname\ of Proposition~\ref{ppn:toprove-recoded} from Proposition~\ref{ppn:toprove-01An}]
Proposition \ref{ppn:A<An} implies that
\begin{multline}
\{\omega \in \Omega|\ \exists t \in [0, 1] \quad (Z (\omega)_{t + u} - Z (\omega)_t)_{u \leq 0} \in \mcal{A}\} \\
\subset \liminf_{n \to \infty} \{\omega|\ \exists t \in [0, 1] \quad (Z (\omega)_{t + u} - Z (\omega)_t)_{u \leq 0} \in \mcal{A}_n\}
;\end{multline}
therefore, by the (first) Borel–Cantelli lemma, Proposition~\ref{ppn:toprove-01An} yields that
\[\label{toprove-01}
\Pr \bigl(\exists t \in [0, 1] \quad (Z (\omega)_{t + u} - Z (\omega)_t)_{u \leq 0} \in \mcal{A}\bigr) = 0
.\]
But, since the increments of the fractional Brownian motion are stationary, in \eqref{toprove-01} we may replace $[0, 1]$ by $[n, n + 1]$ for all $n \in \ZZ$; and then, by countable union, we get the wished result \eqref{toprove-recoded}.
\end{proof}

So, in the sequel, our new goal will be to prove Proposition~\ref{ppn:toprove-01An}.

\section{Pathwise control via pointwise control}\label{sec:Pathwise_control}

\subsection{A regularity result}

One of our issues to prove Proposition~\ref{ppn:toprove-01An} is that we have to bound the probability of an event defined as a union for uncountably infinitely many $t$'s. To overcome this issue, we will need a tool to \quotes{get rid of the trajectorial aspects} of the problem: this is the work of this section.

First, we need a little notation:
\begin{dfn}[Processes $\hat{\Gamma}_i$ and variables $\Gamma_i$]
Within the context of \S~\ref{sec:Context}, for $i \in \NN$, we define the following random process (indexed by $t \in \RR$):
\[\label{dfnhatGam}
\hat{\Gamma}_i(\omega)_t \coloneqq \frac{\Drift \bigl((Z (\omega)_{t + u} - Z (\omega)_t)_{u \leq 0}\bigr) (r^i)}{r^{H i}}
.\]
We also define the following random variable:
\[\label{dfnGam}
\Gamma_i (\omega) \coloneqq \hat{\Gamma}_i (\omega)_0 = \frac{\Drift ((Z (\omega)_u)_{u \leq 0})(r^i)}{r^{H i}}
.\]
\end{dfn}

Then, the main result of this section is the following
\begin{lem}\label{lem:regGamhat}
In the context of this section, there exist absolute%
\footnote{Remember that in this article, \quotes{absolute} constants may actually depend on $H$.}
constants $C_{\setCa{a}} > 0,\ab C_{\setCa{b}} < \infty$ (whose exact expressions do not matter) such that for all $i \in \NN$, for all $T > 0$:
\[\label{regGamhat}
\Pr \bigl(\exists t \in [0, T] \quad \abs{(\hat{\Gamma}_i)_t - \Gamma_i} \geq 1\bigr) \leq C_{\ref{b}} \exp \bigl(-C_{\ref{a}} (r^i \div T)^{2 H \wedge 1}\bigr)
.\]
\end{lem}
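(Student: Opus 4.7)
The key observation is that, for each fixed $i$, the map $t \mapsto (\hat{\Gamma}_i)_t$ is a stationary centred Gaussian process. Indeed, by repeating the argument from the proof of Lemma~\ref{lem:Drift+Levy} with the stopping time $\tau$ replaced by a deterministic $t$ and keeping only the past-dependent Wiener integral, one obtains the moving-average representation
\[
(\hat{\Gamma}_i)_t = \frac{C_1}{r^{Hi}} \int_{-\infty}^0 \bigl[(r^i - u)^\Hmoh - (-u)^\Hmoh\bigr] \dx{W}_{t + u} .
\]
Hence $t \mapsto (\hat{\Gamma}_i)_t - \Gamma_i$ is a centred Gaussian process on $[0, T]$, and \eqref{regGamhat} reduces to a question of Gaussian concentration. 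Moreover self-similarity of $W$ implies that $t' \mapsto (\hat{\Gamma}_i)_{r^i t'}$ has the same law as $t' \mapsto (\hat{\Gamma}_0)_{t'}$, so by rescaling time one may reduce to the case $r^i = 1$.

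The crux of the proof is then the variance estimate
\[
\var\bigl((\hat{\Gamma}_0)_{t'} - (\hat{\Gamma}_0)_0\bigr) \leq C\,(t')^{2H \wedge 1} \qquad (0 \leq t' \leq 1) .
\]
Write $\phi(u) := (1 - u)^\Hmoh - (-u)^\Hmoh$ for $u \leq 0$; then by the Itô isometry (after splitting the domain at the origin),
\[
\var\bigl((\hat{\Gamma}_0)_{t'} - (\hat{\Gamma}_0)_0\bigr) = C_1^2 \int_0^{t'} \phi(s - t')^2\,\dx{s} + C_1^2 \int_{-\infty}^0 \bigl[\phi(s - t') - \phi(s)\bigr]^2\,\dx{s} .
\]
The first term represents the fresh Wiener noise produced between times $0$ and $t'$; it contributes $O(t')$ when $H > 1/2$ (where $\phi$ is bounded near the origin) and the enhanced $O((t')^{2H})$ when $H < 1/2$ (where $\phi$ has an integrable singularity at $0$). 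The second term, a first-order difference of $\phi$ in its argument, contributes $O((t')^{2H})$ in both regimes (the dominant contribution comes from $|s| \lesssim t'$, where the singular behaviour of $\phi$ near the origin forces a direct estimate rather than a Taylor expansion). Summing, the total variance is $\lesssim (t')^{2H}$ if $H < 1/2$ and $\lesssim t'$ if $H > 1/2$, hence $\lesssim (t')^{2H \wedge 1}$ in both cases.

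Once this variance bound is at hand, stationarity extends it to $\var((\hat{\Gamma}_0)_{t'} - (\hat{\Gamma}_0)_{s'}) \leq C\,|t' - s'|^{2H \wedge 1}$ for all $s', t' \in [0, T/r^i]$; Dudley's entropy bound (or Fernique's theorem) then gives $\E\bigl[\sup_{t'} |(\hat{\Gamma}_0)_{t'} - (\hat{\Gamma}_0)_0|\bigr] \leq C'(T/r^i)^{H \wedge 1/2}$, and the Borell--Sudakov--Tsirelson inequality (applied to a centred Gaussian process with maximum variance $\sigma_{\max}^2 \leq C(T/r^i)^{2H \wedge 1}$) yields
\[
\Pr\bigl(\sup_{t' \in [0, T/r^i]} |(\hat{\Gamma}_0)_{t'} - (\hat{\Gamma}_0)_0| \geq 1\bigr) \leq 2 \exp\bigl(-c\,(r^i/T)^{2H \wedge 1}\bigr) ,
\]
provided $T/r^i$ is smaller than some constant $c_0 = c_0(H)$; this is exactly \eqref{regGamhat} after unscaling. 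In the complementary regime $T/r^i \geq c_0$, the right-hand side of \eqref{regGamhat} is $\geq 1$ upon choosing $C_{\ref{b}}$ large enough, making the bound trivially satisfied.

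The main obstacle I anticipate is the variance estimate: the kernel $\phi$ has qualitatively different behaviour near the origin depending on whether $H$ lies below or above $1/2$ (integrable blow-up versus sub-linear vanishing), and in each case one must carefully account for both the \quotes{near} ($|s| \lesssim t'$) and \quotes{far} ($|s| \gg t'$) parts of the two integrals to produce the correct combined exponent $2H \wedge 1$. The subsequent passage from the pointwise variance bound to uniform-in-$t$ concentration relies only on classical Gaussian-process machinery.
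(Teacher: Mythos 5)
Your proof is correct and follows essentially the same route as the paper: rescale by $r^i$ to reduce to $i=0$, establish the incremental-variance bound $\var(\hat\Gamma_t - \hat\Gamma_0) = O(t^{2H\wedge 1})$ from the moving-average Wiener-integral representation, and conclude by Gaussian sup-concentration. The only stylistic difference is that you invoke off-the-shelf Gaussian-process machinery (Dudley's entropy bound plus Borell--Sudakov--Tsirelson) for the concentration step, whereas the paper proves a self-contained elementary version of the same fact (Lemma~\ref{lem:regGaussian}, via a dyadic chaining argument in Appendix~\ref{sec:supGauss}); these are equivalent in content here.
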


\begin{proof}
First, since $Z$ is scale invariant with exponent $H$ (and operator $\Drift$ preserves that scale invariance), it will be enough to prove Lemma~\ref{lem:regGamhat} for $i = 0$; so we will only handle that case. Then the subscript $i$ becomes useless, so we remove it in our notation.

Because of the characterization \eqref{Drift} of $\Drift$, $\hat{\Gamma}_t$ may be written as a function of $W$:
\[\label{GamhatofdW}
\hat{\Gamma}_t = \int_{-\infty}^t \bigl((t + 1 - s)^{\Hmoh} - (t - s)^{\Hmoh}\bigr) \dx{W_s}
.\]
That shows that $\hat{\Gamma}$ is a stationary centred Gaussian random process, with
\begin{multline}\label{Var(Gamhatt-Gamhat0)}
\var \bigl(\hat{\Gamma}_t -\hat{\Gamma}_0\bigr)
= \int_{\RR} \bigl(\1{s \leq t} (t + 1 - s)^{\Hmoh} - \1{s \leq 0} (1 - s)^{\Hmoh} - (t - s)_+^{\Hmoh} + (-s)_+^{\Hmoh}\bigr)^2 \dx{s} \\
\stackrel{t \to 0}{=} O(t^{2 H \wedge 1})
.\end{multline}

To go further, we need the following lemma, whose proof is postponed to Appendix~\ref{sec:supGauss}:
\begin{lem}\label{lem:regGaussian}
Let $(X_t)_{t \in [0, 1]}$ be a centred Gaussian process such that $X_0 = 0$ a.s.\ and
\[
\forall t, s \in [0, 1] \quad \var (X_t - X_s) \leq \abs{t - s}^{2 \theta}
\]
for some $\theta \in (0, 1]$. Then it is known that, by the Kolmogorov continuity theorem, $X$ has a continuous version. The present lemma states that, for this continuous version, the random variable $\norm{X} (\omega) \coloneqq \sup_{t \in [0, 1]} \abs{X (\omega)_t}$ is sub-Gaussian with absolute constants, i.e.\ there exist constants $C_{\setCa{c}} (\theta) > 0,\ab C_{\setCa{d}} (\theta) < \infty$ such that
\[\label{regGaussian}
\forall x \geq 0 \quad \Pr (\norm{X} \geq x) \leq C_{\ref{d}} \exp (-C_{\ref{c}} x^2)
.\]
\end{lem}

We apply Lemma~\ref{lem:regGaussian} in the following way. From \eqref{Var(Gamhatt-Gamhat0)}, one has $\var \bigl(\hat{\Gamma}_t -\hat{\Gamma}_s\bigr) \leq C_{\setCa{e}} \abs{t - s}^{2 H \wedge 1}$ for all $t, s \in [0, 1]$, for some $C_{\ref{e}} < \infty$. Therefore, for $T \leq 1$, the random process
\[
X (\omega)_t \coloneqq (C_{\ref{e}} T^{2 H \wedge 1})^{-1/2} (\hat{\Gamma} (\omega)_{t T} - \Gamma (\omega))
\]
satisfies the assumptions of Lemma~\ref{lem:regGaussian} with $\text{\squotes{$\theta$}} = H \wedge 1/2$, so that \eqref{regGaussian} yields:
\[\label{0028}
\Pr \bigl(\exists t \in [0, T] \quad \abs{(\hat{\Gamma}_i)_t - \Gamma_i} \geq x\bigr) \leq C_{\ref{d}} \exp \bigl(-C_{\ref{c}} C_{\ref{e}}^{-1} x^2 \div T^{2 H \wedge 1}\bigr)
.\]
This implies \eqref{regGamhat} for $T \leq 1$, with constants not depending on $T$. On the other hand, up to replacing $C_{\ref{b}}$ by $(e^{C_{\ref{a}}} \vee C_{\ref{b}})$, \eqref{regGamhat} is automatically true for $T > 1$; so the proof of Lemma~\ref{lem:regGamhat} is completed.
\end{proof}

\subsection{Third reformulation of the main result}

Now, we will see how Proposition~\ref{lem:regGamhat} allows one to find an easier sufficient condition for Proposition~\ref{ppn:toprove-01An}. First of all, we have to introduce a little notation: in this section, we fix some arbitrary $\alpha' \in (0, \alpha),\ab p' \in (0, p)$, and we define $\mcal{A}'_n$ by the variant of Equation~\eqref{dfnAn} in which $\alpha$ and $p$ are replaced by resp.\ $\alpha'$ and $p'$; also, we fix some arbitrary $\tilde{r} \in (0, r)$, and we set
\[
T_n \coloneqq \tilde{r}^n
.\]

Now, we introduce the following events of $\Omega$:

\begin{dfn}[Events $A_n$, $A'_n$, $\bar{A}_n$, $\bar{A}_n^k$ and $\bar{A}_n^*$]\label{dfn:Aevents}
\begin{align}
\label{dfnrmAn}
A_n &\coloneqq \{\omega|\ (Z (\omega)_u)_{u \leq 0} \in \mcal{A}_n \} ;\\
A_n' &\coloneqq \{\omega|\ (Z (\omega)_u)_{u \leq 0} \in \mcal{A}'_n\} ;\\
\bar{A}_n &\coloneqq \{\omega|\ \exists t \in [0, T_n] \quad (Z (\omega)_{t + u} - Z (\omega)_t)_{u \leq 0} \in \mcal{A}_n\} ;\\
\bar{A}_n^k &\coloneqq \{\omega|\ \exists t \in [k T_n, (k + 1) T_n] \quad (Z (\omega)_{t + u} - Z (\omega)_t)_{u \leq 0} \in \mcal{A}_n\} ;\\
\bar{A}_n^* &\coloneqq \{\omega|\ \exists t \in [0, 1] \quad (Z (\omega)_{t + u} - Z (\omega)_t)_{u \leq 0} \in \mcal{A}_n\}
.\end{align}
\end{dfn}

Then we claim that

\begin{lem}\label{lem:barAn-An'}
For $n$ large enough,
\[\label{barAn-An'}
\omega \in \bar{A}_n \setminus A_n' \quad \Rightarrow \quad \exists i \in \integers{n} \quad \exists t \in [0, T_n] \quad \abs{\hat{\Gamma}_i (\omega)_t - \Gamma_i (\omega)} \geq 1
.\]
\end{lem}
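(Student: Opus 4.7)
I would prove the contrapositive: assume that for every $i \in \integers{n}$ and every $t \in [0, T_n]$ one has $\abs{\hat{\Gamma}_i(\omega)_t - \Gamma_i(\omega)} < 1$, and under this assumption show that $\omega \in \bar{A}_n$ forces $\omega \in A_n'$ (for $n$ large enough).

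First, if $\omega \in \bar{A}_n$, unfolding the definition produces some $t_\ast \in [0, T_n]$ such that the past trajectory $(Z(\omega)_{t_\ast + u} - Z(\omega)_{t_\ast})_{u \leq 0}$ lies in $\mcal{A}_n$. Rewriting this in the language of $\hat{\Gamma}$ via \eqref{dfnhatGam} and \eqref{dfnAn} gives
\[
\card\bigl\{ i \in \integers{n} \mid \hat{\Gamma}_i(\omega)_{t_\ast} \geq \alpha H^{-1/2} (\log i)_+^{1/2} \bigr\} \geq p n.
\]
Combining with the contrapositive hypothesis $\Gamma_i(\omega) > \hat{\Gamma}_i(\omega)_{t_\ast} - 1$, every index $i$ in the above set satisfies
\[
\Gamma_i(\omega) > \alpha H^{-1/2}(\log i)_+^{1/2} - 1.
\]

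Next, I would use that $\alpha > \alpha'$ to absorb the additive $-1$ into the threshold: choose $i_0$ (an absolute constant) such that for $i \geq i_0$, $(\alpha - \alpha') H^{-1/2} (\log i)^{1/2} \geq 1$, which amounts to $i \geq \exp\bigl(H/(\alpha - \alpha')^2\bigr)$. Then for every $i \geq i_0$ in the set above,
\[
\Gamma_i(\omega) \geq \alpha' H^{-1/2} (\log i)_+^{1/2}.
\]
Discarding at most $i_0$ indices, we get at least $pn - i_0$ values of $i \in \integers{n}$ for which $\Gamma_i(\omega) \geq \alpha' H^{-1/2} (\log i)_+^{1/2}$.

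Finally, using $p > p'$, pick $n$ large enough that $pn - i_0 \geq p' n$, i.e.\ $n \geq i_0 / (p - p')$; then the count above is $\geq p' n$, so $(Z(\omega)_u)_{u \leq 0} \in \mcal{A}'_n$, i.e.\ $\omega \in A_n'$, as wanted. There is no real obstacle here: the whole point is the double ``slack'' $\alpha > \alpha'$ and $p > p'$, which respectively absorb the additive error of size $1$ in $\Gamma_i$ and the finitely many small indices $i < i_0$ one must sacrifice; the bound $t_\ast \in [0, T_n]$ itself is used only to invoke the hypothesis over a set of $t$'s containing $t_\ast$.
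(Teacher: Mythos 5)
Your proof is correct and rests on exactly the same two mechanisms as the paper's: the gap $\alpha > \alpha'$ absorbs the additive error of $1$ once $(\log i)_+^{1/2}$ is large enough, and the gap $p > p'$ absorbs the finitely many indices one has to throw away. The only cosmetic difference is that you argue by contrapositive and discard all indices below the absolute threshold $i_0 = \exp\bigl(H/(\alpha-\alpha')^2\bigr)$, whereas the paper argues directly: it intersects the $\geq pn$ indices with $(\hat{\Gamma}_i)_t$ large and the $> (1-p')n$ indices with $\Gamma_i$ small, obtains at least $(p-p')n$ common indices, and then uses that at least one of them is $\geq (p-p')n - 1$ to force $(\log i)_+^{1/2}$ big enough on a \emph{single} index. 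Both pigeon-hole counts give essentially the same threshold on $n$, of order $e^{H/(\alpha-\alpha')^2}/(p-p')$, so there is nothing to choose between them.
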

\begin{proof}
Assume that $\omega \in \bar{A}_n \setminus A_n'$. Then the fact that $\omega \in \bar{A}_n$ means that there exists some $t \in [0, T_n]$ such that $(Z_{t + u} - Z_t)_{u \leq 0} \in \mcal{A}_n$. For such a $t$, going back to the definitions \eqref{dfnAn} and \eqref{dfnhatGam} of $\mcal{A}_n$ and $\hat{\Gamma}_i$, this means that
\[
\card \{i \in \integers{n}|\ (\hat{\Gamma}_i)_t \geq \alpha H^{-1/2} (\log i)_+^{1/2}\} \geq p n
.\]
Similarly, the fact that $\omega\notin A_n'$ means that
\[
\card \{i \in \integers{n}|\ \Gamma_i \geq \alpha' H^{-1/2} (\log i)_+^{1/2}\} < p' n
.\]
Therefore, there exist at least $(p - p') n$ indices \squotes{$i$} such that $(\hat{\Gamma}_i)_t \geq \alpha H^{-1/2} \* (\log i)_+^{1/2}$ while (for the same $i$) $\Gamma_i < \alpha' H^{-1/2} \* (\log i)_+^{1/2}$. Necessarily one these indices is $\geq (p - p') n - 1$; thus, for such an $i$, one has:
\[\label{1702}
(\hat{\Gamma}_i)_t - \Gamma_i \geq (\alpha - \alpha') H^{-1/2} \bigl(\log \bigl((p - p') n - 1\bigr) \bigr)^{1/2}
.\]
But, provided $n \geq (e^{H \div (\alpha - \alpha')^2} + 1) \div (p - p')$, the r-h.s.\ of \eqref{1702} is $\geq 1$; so in the end we have found $i \in \integers{n},\ab t \in [0, T_n]$ such that $\abs{\hat{\Gamma}_i (\omega)_t - \Gamma_i (\omega)} \geq 1$, proving the lemma.
\end{proof}

Combining Lemma~\ref{lem:barAn-An'} with Lemma~\ref{lem:regGamhat}, we get that
\[
\Pr (\bar{A}_n \setminus A_n') \leq \sum_{i = 0}^{n - 1} C_{\ref{b}} \exp \bigl(-C_{\ref{a}} (r^i \div T_n)^{2 H \wedge 1}\bigr)
,\]
in which the right-hand side is obviously bounded by
\[
n C_{\ref{b}} \exp \bigl(-C_{\ref{a}} (r \div \tilde{r})^{(2 H \wedge 1) n}\bigr),
\]
which shows that $\Pr (\bar{A}_n \setminus A_n')$ decreases superexponentially in $n$ (i.e.\ faster than any exponential).

Now $\bar{A}_n^* \subset \bigcup_{k \in \integers{\ceil{1 \div T_n}}} \bar{A}_n^k$, where $\Pr (\bar{A}^k_n) = \Pr (\bar{A}_n)\ \forall k$ by translation invariance, so it follows that
\[\label{(1/Tn)cases}
\Pr (\bar{A}_n^*) \leq \ceil{1 \div T_n} \Pr (\bar{A}_n) \leq \ceil{\tilde{r}^{-n}} \bigl(\Pr (A_n') + \Pr (\bar{A}_n \setminus A_n')\bigr) =  \ceil{\tilde{r}^{-n}} \Pr (A_n') + o (1)
.\]

Our goal being to prove that $\Pr (\bar{A}_n^*) \to 0$ as $n \to \infty$ (that is just re-writing Proposition~\ref{ppn:toprove-01An} with the notation of this section), it will be sufficient for that to prove the following

\begin{ppn}\label{ppn:P(An') superexponential}
$\Pr (A_n')$ decreases superexponentially in $n$.
\end{ppn}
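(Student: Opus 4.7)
The plan is to recognize $(\Gamma_i)_{i\in\integers{n}}$ as a centered Gaussian vector: since $\Drift$ is linear and $Z$ is Gaussian, each $\Gamma_i$ is a Wiener integral against $W$ (a rescaled variant of the identity obtained from \eqref{GamhatofdW} at $t=0$); moreover, the scale invariance of $Z$ combined with the compatibility of $\Drift$ with rescaling (one checks from \eqref{Kuv} that $K(\lambda u,\lambda v) = \lambda^{-1}K(u,v)$) forces all the $\Gamma_i$ to share a common variance $\sigma^2 := \var\Gamma_0$. The event $A_n'$ then demands that at least $p'n$ coordinates of this Gaussian vector exceed the thresholds $t_i := \alpha' H^{-1/2}(\log i)_+^{1/2}$.

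The first key estimate is a geometric decay of the covariance. Computing $\E[\Gamma_i\Gamma_j]$ by Wiener isometry and rescaling by $u = r^i x$, one finds, for $j \geq i$,
\[
\cov(\Gamma_i,\Gamma_j) = C_1^2\, r^{-H(j-i)}\, f(r^{j-i})
,\]
where $f(\epsilon) := \int_{-\infty}^0\bigl((1-x)^{\Hmoh} - (-x)^{\Hmoh}\bigr)\bigl((\epsilon-x)^{\Hmoh} - (-x)^{\Hmoh}\bigr)\dx{x}$; a routine asymptotic analysis yields $f(\epsilon) = O(\epsilon^{\min(2H,1)})$ as $\epsilon \to 0$, whence $|\cov(\Gamma_i,\Gamma_j)| = O(r^{\gamma|i-j|})$ with $\gamma := \min(H,1-H) > 0$. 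Taking $r$ small enough---as the paper allows---the off-diagonal row sums of the correlation matrix become arbitrarily small, and by Gershgorin, for every $S\subset\integers{n}$ the largest eigenvalue of $\cov((\Gamma_i)_{i\in S})$ is at most $\sigma^2(1+\delta)$, where $\delta$ can be made as small as desired.

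Given this control, a Chernoff-type argument handles each subset. For any $S\subset\integers{n}$, the random variable $L_S := \sum_{i\in S}t_i\Gamma_i$ is centered Gaussian of variance at most $(1+\delta)\sigma^2\sum_{i\in S}t_i^2$, and on the event $\{\forall i\in S:\Gamma_i\geq t_i\}$ satisfies $L_S\geq\sum_{i\in S}t_i^2$; whence
\[
\Pr\bigl(\forall i\in S:\Gamma_i\geq t_i\bigr) \leq \exp\Bigl(-\frac{\sum_{i\in S}t_i^2}{2(1+\delta)\sigma^2}\Bigr) = \prod_{i\in S}(i\vee 1)^{-c}
,\]
with $c := (\alpha')^2/\bigl(2H(1+\delta)\sigma^2\bigr) > 0$, using $t_i^2 = (\alpha')^2 H^{-1}(\log i)_+$. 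Since $i\mapsto (i\vee 1)^{-c}$ is nonincreasing, the maximum of this product over $|S|=k$ equals $((k-1)!)^{-c}$, attained at $S = \integers{k}$; a union bound over subsets thus gives
\[
\Pr(A_n') \leq \sum_{k=\lceil p'n\rceil}^n \binom{n}{k}\,((k-1)!)^{-c} \leq n\cdot 2^n\cdot\bigl((\lceil p'n\rceil-1)!\bigr)^{-c}
,\]
which by Stirling is $\exp(-cp'n\log n + O(n))$, faster than any exponential.

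The main obstacle is establishing the geometric decay of $\cov(\Gamma_i,\Gamma_j)$ cleanly---i.e.\ justifying $f(\epsilon) = O(\epsilon^{\min(2H,1)})$ uniformly near $0$---since this is where the scale separation between levels $r^i$ is essential and where the hypothesis \quotes{$r$ small enough} is finally activated. The author's reference to Appendix~\ref{sec:alm-diag-mat} hints at a finer approach via a pointwise inverse estimate for the covariance submatrices and direct manipulation of the Gaussian density on $\prod_{i\in S}[t_i,\infty)$; the coarser Gershgorin-plus-Chernoff route sketched here should nonetheless suffice.
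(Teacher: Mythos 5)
Your proof is correct, and it takes a genuinely different route from the paper after the common preliminaries (Gaussian vector, geometric decay of $\cov(\Gamma_i,\Gamma_j)$, union bound over subsets). The paper's key step is a \emph{density comparison}: it invokes Lemma~\ref{lem:alm-diag-mat} to control the inverse of the covariance matrix entry-by-entry, deduces that the joint density of $\vec\Gamma_{\integers{n}}$ is bounded by $(\Phi_{\ref{j}}\Phi_{\ref{k}})^{n/2}$ times the density of an i.i.d.\ Gaussian vector $\vec\Pi$ with slightly inflated variance, and then factors the probability of the orthant event $\{\forall i\in I:\Pi_i\geq t_i\}$ product by product. You instead use a \emph{linear functional / Chernoff} argument: bound the top eigenvalue of any covariance submatrix via Gershgorin, then bound $\Pr(\forall i\in S:\Gamma_i\geq t_i)\leq\Pr(L_S\geq\sum_{i\in S}t_i^2)$ with $L_S:=\sum_{i\in S}t_i\Gamma_i$ a single Gaussian. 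Both yield the same factorial bound $((k-1)!)^{-c}$ and superexponential decay. Your route is shorter and more elementary—it bypasses Appendix~\ref{sec:alm-diag-mat} entirely and requires only that the off-diagonal row sums of the correlation matrix be finite, not small. The paper's density approach pays more upfront (the nearly-diagonal inverse estimates are delicate) but gives a stronger conclusion: a multiplicative comparison between the two densities that could be reused for other orthant-type events; the author himself remarks that Lemma~\ref{lem:alm-diag-mat} is sharper than actually needed. One minor caveat in your sketch: the claim $f(\epsilon)=O(\epsilon^{\min(2H,1)})$, while true, would need the brief verification that you flag; in the paper this same estimate appears as \eqref{CovGiGj} and is derived in the same one-line computation, so there is no real gap—just an elision you were right to notice.
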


So, as $A_n'$ corresponds to a condition on a finite-dimensional Gaussian vector, we have managed to get completely rid of the trajectorial aspects of the problem! Now our ultimate goal will be to prove Proposition~\ref{ppn:P(An') superexponential}.

\begin{rmk}
As the \quotes{prime} symbols would be somehow cumbersome, we will drop them  in the sequel, thus actually proving the superexponential decrease of $\Pr (A_n)$. Nevertheless this should not be confusing, as the constraints on $\alpha$ and $p$ (and therefore on $A_n$) are the same as on $\alpha'$ and $p'$ (and therefore on $A_n'$).
\end{rmk}

\section{Final computations: controlling a Gaussian vector}\label{sec:Controlling_Gaussian_vector}

\subsection{Covariance structure}

\begin{NB}
In this section, for \squotes{$X$} a symbol and $I$ a discrete set, \quotes{$\vec{X}_I$} will be a shorthand for \quotes{$(X_i)_{i \in I}$}.
\end{NB}

So, our goal is to prove the superexponential decay of $\Pr (A_n)$, which can be re-written as
\[\label{Pr(A_n)-rewritten}
\Pr (A_n) = \Pr \bigl(\card \{i \in \integers{n}|\ \Gamma_i \geq \alpha H^{-1/2} (\log i)_+^{1/2}\} \geq p n \bigr)
\]
(where $\Gamma_i$ was defined by \eqref{dfnGam}). \eqref{Pr(A_n)-rewritten} obviously implies that
\[
\Pr (A_n) \leq \sum_{\substack{I \subset \integers{n}\\ \abs{I} \geq p n}} \Pr \bigl(\forall i \in I \quad \Gamma_i \geq \alpha H^{-1/2} (\log i)_+^{1/2} \bigr)
.\]
As there are only $2^n$ subsets of $\integers{n}$, to prove that $\Pr(A_n)$ decreases superexponentially it is therefore sufficient to prove that
\[\label{8372}
\sup_{\substack{I \subset\integers{n}\\ \abs{I} \geq p n}} \Pr \bigl(\forall i \in I \quad \Gamma_i \geq \alpha H^{-1/2} (\log i)_+^{1/2}\bigr)
\]
decreases superexponentially.

Now, by \eqref{Drift} one has
\[
\Gamma_i (\omega) = r^{-H i} \int_{-\infty}^{0} \bigl((r^i - s)^\Hmoh - (-s)^\Hmoh\bigr) \dx{W} (\omega)_s
;\]
therefore $\vec{\Gamma}_{\NN}$ is a centred Gaussian vector, with:
\begin{multline}\label{CovGiGj}
\cov (\Gamma_i, \Gamma_j)
= r^{-(i + j) H} \int_{-\infty}^0 \bigl((r^i - s)^\Hmoh - (-s)^\Hmoh\bigr) \bigl((r^j - s)^\Hmoh - (-s)^\Hmoh\bigr) \dx{s} \\
= r^{-\abs{i - j} H} \int_{-\infty}^0 \bigl((r^{\abs{i - j}}-s)^\Hmoh - (-s)^\Hmoh\bigr) \bigl((1 - s)^\Hmoh - (-s)^\Hmoh\bigr) \dx{s} \\
\leq C_{\setCa{f}} r^{(1/2 - \abs{\Hmoh}) \abs{i - j}}
,\end{multline}
for some absolute constant $C_{\ref{f}} < \infty$.
Therefore, provided $r$ was chosen small enough, we have the following control on the covariance matrix of $\vec{\Gamma}_{\NN}$:
\begin{align}
\label{CovGiGj-1a}
\cov (\Gamma_i, \Gamma_j) &= \sigma^2 && \text{for $i = j$;} \\
\label{CovGiGj-1b}
\abs{\cov (\Gamma_i, \Gamma_j)} &\leq \sigma^2\epsilon^{\abs{i - j}} && \text{for $i \neq j$,}
\end{align}
where $\epsilon > 0$ is some small parameter which will be fixed later, and where $\sigma \coloneqq \var (\Gamma)^{1/2} > 0$ (since $H \neq 1/2$).

\subsection{Density estimates}

To exploit \eqref{CovGiGj-1a}-\eqref{CovGiGj-1b}, we need the following lemma (whose proof is postponed to Appendix~\ref{sec:alm-diag-mat}):

\begin{lem}\label{lem:alm-diag-mat}
For $n \in \NN$, $\epsilon > 0$, let $\mbf{A} \eqqcolon (\!(a_{i j})\!)_{i, j \in \integers{n}}$ be a square matrix such that $a_{i i} = 1\ab\ \forall i$ and $\abs{a_{i j}} \leq \epsilon^{\abs{i - j}}\ab\ \forall i \neq j$. Then:
\[\label{det-alm-diag-mat}
\det \mbf{A} \geq \exp(-n \Phi_{\setCa{g}} (\epsilon) \epsilon^2)
,\]
where $\Phi_{\ref{g}} \colon (0, \infty) \to [1, \infty]$ is some absolute function (in particular, not depending on $n$) such that $\Phi_{\ref{g}} (\epsilon) \stackrel{\epsilon\to0}{\to} 1$—we will call such a function a \emph{quasi-one function}. In particular, provided $\epsilon$ is small enough, $\mbf{A}$ is invertible. Then the present lemma asserts moreover that, denoting $\mbf{A}^{-1} \eqqcolon (\!(b_{i j})\!)_{i, j}$:
\begin{align}
\label{ctrl_bij}
\abs{b_{i j}} & \leq 2^{\abs{i - j} - 1} (\Phi_{\setCa{h}} (\epsilon) \epsilon)^{\abs{i - j}} && \forall i\neq j ;\\
\label{ctrl_bii}
\abs{b_{i i} - 1} & \leq 2 \Phi_{\setCa{i}} (\epsilon) \epsilon^2 && \forall i
,\end{align}
for $\Phi_{\ref{i}}$ and $\Phi_{\ref{h}}$ some other absolute \quotes{quasi-one functions}.
\end{lem}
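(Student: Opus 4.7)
The plan is to write $\mbf{A} = \mbf{I} + \mbf{E}$ with $E_{ii} = 0$ and $\abs{E_{ij}} \leq \epsilon^{\abs{i-j}}$, and treat $\mbf{E}$ as a small perturbation of $\mbf{I}$. A quick Schur-test estimate gives $\|\mbf{E}\|_{\mathrm{op}} \leq \max_i \sum_{j \neq i} \abs{E_{ij}} \leq 2 \sum_{k \geq 1} \epsilon^k = 2\epsilon/(1-\epsilon)$, which is less than $1$ for $\epsilon$ small enough. Hence $\mbf{A}$ is invertible and the Neumann series $\mbf{A}^{-1} = \sum_{m \geq 0} (-\mbf{E})^m$ converges absolutely, providing the common tool for all three estimates.

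For the determinant lower bound, I would use the identity $\log \det \mbf{A} = \trace \log(\mbf{I} + \mbf{E}) = \sum_{k \geq 1} \frac{(-1)^{k-1}}{k} \trace(\mbf{E}^k)$. The decisive point is that $\trace(\mbf{E}) = 0$ because of the zero diagonal, so the leading contribution is the $k = 2$ term $-\tfrac{1}{2} \trace(\mbf{E}^2) = -\tfrac{1}{2}\sum_{i \neq j} E_{ij} E_{ji}$, of modulus at most $\tfrac{1}{2} \sum_{i \neq j} \epsilon^{2\abs{i-j}} \leq n \epsilon^2 / (1-\epsilon^2)$. The remaining terms $k \geq 3$ are bounded by $\sum_{k \geq 3} \frac{n}{k} \|\mbf{E}\|_{\mathrm{op}}^k = O(n \epsilon^3)$. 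Combining, $\log \det \mbf{A} \geq -n \epsilon^2 \Phi_{\ref{g}}(\epsilon)$ with $\Phi_{\ref{g}}(\epsilon) \to 1$ as $\epsilon \to 0$.

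The entries of $\mbf{A}^{-1} = (\!(b_{ij})\!)$ are then extracted from the Neumann series. For the diagonal, $b_{ii} - 1 = \sum_{m \geq 2} (-1)^m (\mbf{E}^m)_{ii}$: the $m = 2$ term equals $\sum_{j \neq i} E_{ij} E_{ji}$ and has modulus at most $2 \epsilon^2/(1-\epsilon^2)$, while the $m \geq 3$ contributions sum to $O(\epsilon^3)$, yielding \eqref{ctrl_bii}. For $i \neq j$ with $d = \abs{i-j}$, one expands $(\mbf{E}^m)_{ij}$ as a sum over walks $i = i_0 \to \cdots \to i_m = j$ with $i_a \neq i_{a+1}$, each weighted by $\prod_a \epsilon^{\abs{i_a - i_{a+1}}} = \epsilon^L$ with $L = \sum_a \abs{i_a - i_{a+1}} \geq d$ by the triangle inequality. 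The clean combinatorial observation is that \emph{monotone} walks (all steps of the same sign) have exactly $L = d$ and are counted by $\sum_{m=1}^d \binom{d-1}{m-1} = 2^{d-1}$, contributing precisely the principal term $2^{d-1} \epsilon^d$; a non-monotone walk must carry at least one balanced excursion of length $\geq 2$, and parameterizing non-monotone walks as monotone ones with inserted excursions, a geometric/generating-function summation absorbs them into a factor $\Phi_{\ref{h}}(\epsilon)^d$ with $\Phi_{\ref{h}}(\epsilon) \to 1$.

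The main obstacle will be exactly this last step: isolating the sharp constant $2^{\abs{i-j}-1}$ together with a quasi-one function $\Phi_{\ref{h}}$, rather than the looser $(C\epsilon)^d$ (with $C$ around $3$) that would follow from crudely bounding $\sum_m \|\mbf{E}\|_\infty^m$. One must carefully separate the ``direct'' monotone contributions, which saturate the geometry, from walks with back-and-forth excursions, and show that the latter can be encoded and summed as a convergent series in $\epsilon^2$ without spoiling the base rate $2^{d-1}$.
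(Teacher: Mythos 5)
Your decomposition $\mbf{A} = \mbf{I} + \mbf{E}$ and your overall plan are essentially the paper's (which writes $\mbf{A} = \mbf{I}_n - \mbf{H}$, a sign difference only): convergent Neumann series via the norm bound $\norm{\mbf{E}}\leq 2\epsilon/(1-\epsilon)$, the trace-log identity for the determinant with the decisive observation $\trace\mbf{E}=0$, a sharper bound on $\trace\mbf{E}^2$ together with crude norm bounds for $k\geq3$, and a combinatorial reading of $(\mbf{E}^m)_{ij}$ as a sum over $\ZZ$-walks weighted by $\epsilon^{\text{total variation}}$. Your determinant bound and the diagonal bound \eqref{ctrl_bii} are correctly argued and coincide with the paper's reasoning.

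The off-diagonal bound \eqref{ctrl_bij}, however, you sketch but do not establish, and you say so yourself. You correctly identify the monotone walks (compositions of $d=\abs{i-j}$) as the source of the principal term $2^{d-1}\epsilon^d$, and you propose to view a general walk as a monotone walk with inserted excursions and to absorb the excursions into a quasi-one factor, but then you declare that isolating the sharp base $2^{d-1}$ together with a quasi-one correction is \dquotes{the main obstacle.} That step \emph{is} the proof of \eqref{ctrl_bij}, so as written this is a genuine gap. For comparison, the paper makes it rigorous by an explicit injective encoding of walks into words over the four-symbol alphabet $\{\psymb,\Psymb,\msymb,\Msymb\}$: each step $s_i\to s_{i+1}$ is recorded as $\abs{s_{i+1}-s_i}-1$ unmarked symbols of its sign followed by one marked symbol of its sign, so a $k$-step walk of net displacement $z$ ($d=\abs{z}$) and total variation $L=d+2m$ maps to an $L$-letter word with $(L-d)/2$ minus-type letters and exactly $k$ marks, the last letter always marked. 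Counting all such words (not only the images of valid walks) gives the upper bound $\binom{L}{(L-d)/2}\binom{L-1}{k-1}$, which is Equation~\eqref{thebound_hkij}. Summing first over $k$ yields $2^{L-1}$, and then summing over $m$ with weight $\epsilon^{2m}$ produces $2^{d-1}\epsilon^d\sum_{m\geq0}\binom{d+2m}{m}(4\epsilon^2)^m$, where the inner sum is shown by an elementary estimate to satisfy $\sum_{m\geq0}\binom{d+2m}{m}x^m\leq\bigl(e^{2x}+4ex/(1-4ex)\bigr)^d$ for $d\geq1$, i.e.\ a quasi-one function of $x=4\epsilon^2$ raised to the power $d$, exactly as required. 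Your excursion picture is morally this same counting, but one still has to turn it into a concrete injective encoding and a summable bound of this type; that is what is missing from the proposal.
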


We apply Lemma~\ref{lem:alm-diag-mat} to the covariance matrix of $\vec{\Gamma}_{\integers{n}}$ (assuming $\epsilon$ was chosen small enough so that $\Phi_{\ref{g}}$ is finite); then, the formula for the density of Gaussian vectors gives that:
\begin{multline}
\frac{\dx{\Pr} (\vec{\Gamma}_{\integers{n}} = \vec{\gamma}_{\integers{n}})}{\dx{\gamma}}
\leq \biggl(\frac{\exp (\Phi_{\ref{g}} (\epsilon) \epsilon^2)}{2\pi \sigma^2}\biggr)^{n \div 2} \times \\
\exp \biggl(\frac{1}{2 \sigma^2} \Bigl((-1 + 2 \Phi_{\ref{i}} (\epsilon) \epsilon^2) \sum_{i \in \integers{n}} \gamma_i^2 + {\textstyle\frac{1}{2}} \sum_{\substack{i, j \in \integers{n}\\ i \neq j}} \abs{\gamma_i} \abs{\gamma_j} \times (2 \Phi_{\ref{h}} (\epsilon) \epsilon)^{\abs{i - j}}\Bigr)\biggr)
.\end{multline}
Bounding above $\abs{\gamma_i} \abs{\gamma_j}$ by $\frac{1}{2} (\gamma_i^2 + \gamma_j^2)$, that is bounded again by
\[\label{bound-dPg/dg}
\bigl(\Phi_{\setCa{j}} (\epsilon) \div 2 \pi \sigma^2\bigr)^{n \div 2} \exp \Bigl(-\frac{1}{2\Phi_{\setCa{k}}(\epsilon) \sigma^2} \sum_{i \in \integers{n}} \gamma_i^2\Bigr)
,\]
where
\[
\Phi_{\ref{j}}(\epsilon) \coloneqq \exp (\Phi_{\ref{g}}(\epsilon) \epsilon^2)
\]
and
\[
\Phi_{\ref{k}} (\epsilon) \coloneqq \Bigl(1 - 2 \Phi_{\ref{i}} (\epsilon) \epsilon^2 - {\textstyle \frac{1}{2}} \sum_{z \in \ZZ^*} (2 \Phi_{\ref{h}} (\epsilon) \epsilon)^{\abs{z}} \Bigr)_+^{-1}
\]
are \quotes{quasi-one functions} again.

In the sequel we assume that $\epsilon$ was chosen small enough so that $\Phi_{\ref{j}}(\epsilon),\ab \Phi_{\ref{k}}(\epsilon) < \infty$; and we define the following vectorial random variable (which we are actually only interested in through its law):

\begin{dfn}[Variable $\vec{\Pi}_{\NN}$]
$\vec{\Pi}_{\NN}$ is a random vector on $\RR^{\NN}$ whose entries are i.i.d.\ $\mcal{N} (0,\ab \Phi_{\ref{k}} (\epsilon) \sigma^2)$.
\end{dfn}

Then, Equation~\eqref{bound-dPg/dg} can be rephrased into:
\[
\frac{\dx{\Pr} (\vec{\Gamma}_{\integers{n}} = \vec{\gamma}_{\integers{n}})} {\dx{\Pr} (\vec{\Pi}_{\integers{n}} = \vec{\gamma}_{\integers{n}})} \leq ( \Phi_{\ref{j}}(\epsilon) \Phi_{\ref{k}}(\epsilon) )^{n \div 2} \quad \text{uniformly in $\vec{\gamma}_{\integers{n}}$}
.\]

Therefore, for $I \subset \integers{n}$ with $\abs{I} \geq p n$:
\begin{multline}\label{prefinal bound}
\Pr \bigl(\forall i \in I \quad \Gamma_i \geq \alpha H^{-1/2} (\log i)_+^{1/2}\bigr) \leq \\
(\Phi_{\ref{j}} (\epsilon) \Phi_{\ref{k}} (\epsilon))^{n \div 2} \Pr \bigl(\forall i \in I \quad \Pi_i \geq \alpha H^{-1/2} (\log i)_+^{1/2}\bigr)
.\end{multline}
But
\begin{multline}\label{final bound}
\Pr \bigl(\forall i \in I \quad \Pi_i \geq \alpha H^{-1/2} (\log i)_+^{1/2}\bigr) \\
= \prod_{i \in I} \Pr \bigl(\Pi_i \geq \alpha H^{-1/2} (\log i)_+^{1/2}\bigr) \\
= \prod_{i \in I} \Pr\bigl(\mcal{N} (0, 1) \geq C_{\setCa{l}} (\log i)_+^{1/2}\bigr) \\
\leq \prod_{i \in I} \exp \bigl(-C_{\ref{l}}^2 (\log i)_+ \div 2\bigr) \\
= \prod_{i\in I} (i\vee1)^{-C_{\ref{l}}^2 \div 2} \\
\leq (\abs{I} - 1)_+!^{-C_{\ref{l}}^2 \div 2}
\leq (\ceil{p n} - 1)_+!^{-C_{\ref{l}}^2 \div 2}
\end{multline}
(with $C_{\ref{l}} \coloneqq \alpha H^{-1/2} \div \Phi_{\ref{k}} (\epsilon)^{1/2} \sigma$), where the penultimate inequality comes from ordering $I \eqqcolon \{i_0, i_1, \ldots, i_{\abs{I} -1}\}$ with $i_0 < i_1 < \cdots$, and observing that then $i_j \geq j$ for all $j$, so that $(i_j \vee 1)^{-C_{\ref{l}}^2 \div 2} \leq (j \vee 1)^{-C_{\ref{l}}^2 \div 2}$.

Combining \eqref{prefinal bound} with \eqref{final bound} shows that $\Pr \bigl(\Gamma_i \geq \alpha H^{-1/2} (\log i)_+^{1/2}\ \forall i \in I\bigr)$ decreases superexponentially in $n$ uniformly in $I$, which finally proves Proposition~\ref{ppn:P(An') superexponential} and hence Theorem~\ref{thm:main}.\qed

\appendix

\section{Conditional expectation of the fBm}\label{sec:Kuv}

This appendix is devoted to ending the proof of Lemma~\ref{lem:Drift+Levy} initiated in \S~\ref{sec:Drift+Levy}. At the point we have got to, what remains to do is showing that
\[\label{Drift(W)}
C_1 \int_{s = -\infty}^0 \bigl((v - s)^{\Hmoh} - (-s)^{\Hmoh}\bigr) \dx{W}_{\tau + s}
\]
(seen as a trajectory indexed by $v \in \RR_+$) is actually equal to $\Drift((Z_{\tau + u} - Z_\tau)_{u \leq 0})$ with $\Drift$ defined by \eqref{DriftAsIntegral.1}-\eqref{Kuv}, where $W$ is the ordinary Brownian motion driving the fBm $Z$. To alleviate notation, actually we will only prove this result for $\tau \equiv 0$, the original case being the same up to time translation of the increments (hence the informal definition \eqref{Drift} of $\Drift$).

The starting point for our computation is the \emph{Pipiras–Taqqu formula}, which says that Equation~\eqref{ZofdW} defining the past increments of $Z$ as a function of the past increments of $W$ has an \quotes{inverse} giving back the past increments of $W$ from the past increments of $Z$:

\begin{ppn}[{\cite[Cor.~1.1]{Pipiras-Taqqu}}]
In the context of \S~\ref{sec:Context}, one has almost-surely, for all $t$:
\[\label{WofdZ}
W_t = \frac{C_1^{-1}}{\Pi (\Hmoh) \Pi (-\Hmoh)} \int_{\RR} \bigl((t - s)_+^{-\Hmoh} - (-s)_+^{-\Hmoh}\bigr) \dx{Z}_s
.\]
(Recall that $\Pi (\bcdot)$ is Euler's pi function extrapolating the factorial).
\end{ppn}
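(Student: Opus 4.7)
My plan is to express both sides of \eqref{WofdZ} as Wiener integrals against the driving Brownian motion $W$ and to identify the two deterministic integrands. Both sides are centered Gaussian, so this suffices. The left-hand side is trivially $W_t = \int_\RR \1{[0, t]}(v) \dx{W}_v$ (taking $t > 0$ for concreteness; the case $t < 0$ is symmetric). For the right-hand side, I substitute the formal derivative of \eqref{ZofdW},
\[
\dx{Z}_s = C_1 \Hmoh \Bigl[\int_\RR (s - v)_+^{\Hmoh - 1} \dx{W}_v\Bigr] \dx{s},
\]
into $\int_\RR \bigl((t - s)_+^{-\Hmoh} - (-s)_+^{-\Hmoh}\bigr) \dx{Z}_s$ and exchange the order of integration by stochastic Fubini. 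Using $\Hmoh / \Pi(\Hmoh) = 1 / \Pi(\Hmoh - 1)$ (from $\Pi(x) = x \Pi(x - 1)$), this yields
\[
\text{RHS of \eqref{WofdZ}} = \frac{1}{\Pi(\Hmoh - 1) \Pi(-\Hmoh)} \int_\RR g_t(v) \dx{W}_v,
\]
where $g_t(v) \coloneqq \int_v^\infty \bigl((t - s)_+^{-\Hmoh} - (-s)_+^{-\Hmoh}\bigr) (s - v)^{\Hmoh - 1} \dx{s}$, reducing the claim to the pointwise identity $g_t(v) = \Pi(\Hmoh - 1) \Pi(-\Hmoh) \cdot \1{[0, t]}(v)$.

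\textbf{Key computation.} I then evaluate $g_t(v)$ in three cases. For $v > t$, both $(t - s)_+$ and $(-s)_+$ vanish for $s > v$, hence $g_t(v) = 0$. For $0 < v < t$, only the range $s \in (v, t)$ contributes, and the substitution $s = v + (t - v)\sigma$ collapses the integral onto
\[
\int_0^1 (1 - \sigma)^{-\Hmoh} \sigma^{\Hmoh - 1} \dx{\sigma} = B(\Hmoh, 1 - \Hmoh) = \Pi(\Hmoh - 1) \Pi(-\Hmoh),
\]
independent of $v$. For $v < 0$, splitting the $s$-integration at $0$ and substituting $s' = s - v$ rewrites $g_t(v)$ as the difference
\[
\int_0^{t - v} (t - v - s')^{-\Hmoh} (s')^{\Hmoh - 1} \dx{s'} - \int_0^{-v} (-v - s')^{-\Hmoh} (s')^{\Hmoh - 1} \dx{s'},
\]
and the scaling $s' = a\sigma$ shows each term equals $\Pi(\Hmoh - 1) \Pi(-\Hmoh)$ independently of the upper limit $a$, so the difference vanishes, consistent with $g_t(v) = 0$ off $[0, t]$.

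\textbf{Main obstacle.} The hard part will be rigorously justifying the formal differentiation of $Z$ and the stochastic Fubini step when $H < 1/2$ (i.e.\ $\Hmoh < 0$): in that regime the kernel $(s - v)^{\Hmoh - 1}$ has a non-integrable singularity at $s = v$, the pointwise expression for $\dx{Z}_s$ loses its classical sense, and the Beta integral $\int_0^a (a - s)^{-\Hmoh} s^{\Hmoh - 1} \dx{s}$ itself diverges at $s = 0$. The remedy I would adopt, in the spirit of the integration-by-parts device used for $Z$ itself in the remark following \eqref{ZofW.0}, is to integrate by parts in $s$ before invoking Fubini: this transfers the singular exponent $\Hmoh - 1$ onto the smoother factor $(t - s)_+^{-\Hmoh} - (-s)_+^{-\Hmoh}$, whose derivative in $s$ carries exponent $-\Hmoh - 1 > -1$ precisely when $\Hmoh < 0$. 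After this move all integrands are absolutely convergent for every admissible $\Hmoh$, and the Beta identity is recovered by analytic continuation in $\Hmoh$. Alternatively, one may approximate $\1{[0, t]}$ by smooth compactly supported $f_n \to \1{[0, t]}$ in $L^2(\RR)$, establish the identity for $f_n$ (for which all manipulations are classical), and pass to the $L^2(\Omega)$ limit---this is essentially the route of \cite{Pipiras-Taqqu}.
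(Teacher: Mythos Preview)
The paper does not give its own proof of this proposition: it is quoted verbatim as \cite[Cor.~1.1]{Pipiras-Taqqu} and used as a black box to launch the computation of Appendix~\ref{sec:Kuv}. So there is no in-paper argument to compare yours against.

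That said, your reconstruction is the standard one and is essentially what Pipiras--Taqqu do: compose the two fractional kernels, swap the order of integration, and reduce to the Beta identity $\int_0^1 (1-\sigma)^{-\Hmoh}\sigma^{\Hmoh-1}\,d\sigma = \Pi(\Hmoh-1)\Pi(-\Hmoh)$, which shows the composed operator acts as the identity on indicator functions. Your case analysis for $g_t(v)$ is correct when $H>1/2$, and you have correctly isolated the genuine difficulty for $H<1/2$: the kernel $(s-v)^{\Hmoh-1}$ is not locally integrable, the formal ``$\dx Z_s$'' has no pointwise meaning, and the Beta integral itself diverges. Your two proposed fixes---integrate by parts first (mirroring the device in \eqref{ZofW.0}) or approximate $\1{[0,t]}$ in $L^2$ and pass to the limit---are exactly the right moves; the second is how \cite{Pipiras-Taqqu} proceeds, working at the level of $L^2$-closures of the fractional integration/differentiation operators rather than pointwise kernels. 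One small refinement: rather than invoking ``analytic continuation in $\Hmoh$'' (which is a bit delicate to make rigorous here since the ambient $L^2$ spaces change with $\Hmoh$), it is cleaner to interpret the $H<1/2$ kernel $(t-s)_+^{-\Hmoh}$ as a Marchaud-type fractional derivative from the outset, so that the composition with the fractional integral of order $\Hmoh$ is the identity by the standard semigroup property of Riemann--Liouville operators.
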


\begin{NB}
From now on in this appendix, it will be convenient to shorthand \quotes{$1 \div \Pi(\Hmoh) \Pi(-\Hmoh)$} into \quotes{$C_H$}.
\end{NB}

So, let us use \eqref{WofdZ} to get \eqref{DriftAsIntegral.1}-\eqref{Kuv}. First, like \eqref{ZofdW}, Equations~\eqref{Drift(W)} and~\eqref{WofdZ} have to be interpreted by integrating by parts: for $v \geq 0, s \leq 0$, that means resp.\ that:
\[
\eqref{Drift(W)} = \Hmoh C_1 \int_{-\infty}^0 \xi_{\Hmoh - 1} (-s, v) W_s \dx{s}
;\]
\begin{multline}\label{WofZ}
\frac{W_t}{C_1^{-1} C_H} = \\
\Hmoh \int_{-\infty}^t \xi_{-\Hmoh - 1} (t - s, -t) (Z_s - Z_t) \dx{s} + \Hmoh \int_t^0 (-s)^{-\Hmoh-1} Z_s \dx{s} + (-t)^{-\Hmoh} Z_t
,\end{multline}
where we recall that $\xi_r (a, b) \coloneqq (a + b)^r - a^r$. Hence, \eqref{Drift(W)} is equal to:
\begin{align}
\label{1479a}
& \mathbin{\phantom{+}} \Hmoh^2 C_H \int_{s = -\infty}^0 \int_{u = -\infty}^s \xi_{\Hmoh - 1} (-s, v) \xi_{-\Hmoh - 1} (s - u, -s) (Z_u - Z_s) \dx{s} \dx{u} \\
\label{1479b}
& + \Hmoh^2 C_H \int_{s = -\infty}^0 \int_{u = s}^0 \xi_{\Hmoh - 1} (-s, v) (-u)^{-\Hmoh - 1} Z_u \dx{s} \dx{u} \\
\label{1479c}
& + \Hmoh C_H \int_{-\infty}^0 \xi_{\Hmoh - 1} (-s, v) (-s)^{-\Hmoh} Z_s \dx{s} .
\end{align}

Now we are going to re-write each of the terms \eqref{1479a}–\eqref{1479c} as an integral against $Z_u \dx{u}$, in order to get \eqref{DriftAsIntegral.1}-\eqref{Kuv}. First, Term~\eqref{1479c} is already of the wanted form, up to renaming \squotes{$s$} into \squotes{$u$}. Next, Term~\eqref{1479b} simplifies into:
\begin{multline}\label{1354}
\eqref{1479b} =
\Hmoh^2 C_H \int_{u = -\infty}^0 \Bigl(\int_{s = -\infty}^u \xi_{\Hmoh - 1} (-s, v) \dx{s}\Bigr) (-u)^{-\Hmoh - 1} Z_u \dx{u} \\
= -\Hmoh C_H \int_{-\infty}^0 \Bigl[ \xi_{\Hmoh}(-s,v) \Bigr]_{s=-\infty}^u (-u)^{-\Hmoh-1} Z_u \dx{u} \\
= -\Hmoh C_H \int_{-\infty}^0 \xi_{\Hmoh}(-u, v) (-u)^{-\Hmoh - 1} Z_u \dx{u} .
\end{multline}

Term \eqref{1479a} is the hardest to get into the wanted form, because splitting naively the factor $(Z_u-Z_s)$ would yield divergent integrals. To bypass that problem, we first make a truncation: for $\epsilon$ a small positive number,
\begin{align}
\eqref{1479a} & \approx \Hmoh^2 C_H \int_{s = -\infty}^0 \int_{u = -\infty}^{(1 + \epsilon) s} \xi_{\Hmoh - 1}(-s, v) \xi_{-\Hmoh - 1} (s - u, -s) (Z_u - Z_s) \dx{s} \dx{u} \nonumber\\
\label{7204a}
& = \mathbin{\phantom{+}} \Hmoh^2 C_H \iint_{\substack{s < 0\\ u < (1 + \epsilon) s}} \xi_{\Hmoh - 1} (-s, v) \xi_{-\Hmoh - 1} (s - u, -s) Z_u \dx{s} \dx{u} \\
\label{7204b}
& \mathrel{\phantom{=}} - \Hmoh^2 C_H \iint_{\substack{s < 0\\ u < (1 + \epsilon) s}} \xi_{\Hmoh - 1} (-s, v) \xi_{-\Hmoh - 1} (s - u, -s) Z_s \dx{s} \dx{u}
.\end{align}
By the change of variables $(s, u) \leftarrow (u - s, u)$,
\[
\eqref{7204a} = \Hmoh^2 C_H \int_{u = -\infty}^0 \Bigl(\int_{s = -\infty}^{\epsilon u \div (1 + \epsilon)} \1{s > u} \xi_{\Hmoh - 1} (s - u, v) \xi_{-\Hmoh - 1} (-s, s - u) \dx{s}\Bigr) Z_u \dx{u}
;\]
and by the change of variables $(s, u) \leftarrow (u - s, s)$,
\begin{multline}
\eqref{7204b}
= -\Hmoh^2 C_H \int_{u = -\infty}^0 \Bigl(\int_{s = -\infty}^{\epsilon u} \xi_{\Hmoh - 1} (-u, v) \xi_{-\Hmoh - 1} (-s, -u) \dx{s}\Bigr) Z_u \dx{u} \\
\approx -\Hmoh^2 C_H \int_{u = -\infty}^0 \Bigl(\int_{s = -\infty}^{\epsilon u \div (1 + \epsilon)} \xi_{\Hmoh - 1} (-u, v) \xi_{-\Hmoh - 1} (-s, -u) \dx{s}\Bigr) Z_u \dx{u}
,\end{multline}
where by \quotes{$\approx$} we mean that, for all $v$, the difference between the two members from either side of the \squotes{$\approx$} sign tends to $0$ as $\epsilon \to 0$, as one can check by simple estimates.
So,
\[\label{4625}
\eqref{1479a} \approx \Hmoh^2 C_H \int_{-\infty}^0 \Bigl(\int_{-\infty}^{\epsilon u \div (1 + \epsilon)} J (v, u, s) \dx{s}\Bigr) Z_u \dx{u}
,\]
with
\[
J (v, u, s) \coloneqq \1{s > u} \xi_{\Hmoh - 1} (s - u, v) \xi_{-\Hmoh - 1} (-s, s - u) - \xi_{\Hmoh - 1} (-u, v) \xi_{-\Hmoh - 1}(-s, -u)
.\]
But $\int^0 J (v, u, s) \dx{s}$ does converge, so, letting $\epsilon$ tend to $0$, we get in the end:
\[\label{4625-0}
\eqref{1479a} = \Hmoh^2 C_H \int_{-\infty}^0 \Bigl(\int_{-\infty}^0 J (v, u, s) \dx{s}\Bigr) Z_u \dx{u}
.\]

Summing \eqref{1479c}, \eqref{1354} and \eqref{4625-0}, and observing that $\xi_{\Hmoh - 1} (-u, v) (-u)^{-\Hmoh} -\ab \xi_{\Hmoh} (-u, v) (-u)^{-\Hmoh - 1} = -v (v - u)^{\Hmoh - 1} (-u)^{-\Hmoh - 1}$, finally yields Equation~\hbox{\eqref{DriftAsIntegral.1}-\eqref{Kuv}}.\qed

\section{On thick subsets of $\NN$}\label{sec:thick}

Remember that we have defined a subset $\mcal{I} \subset \NN$ to be \emph{thick} when
\[
\limsup_{n \to \infty} \frac{\abs{\mcal{I} \cap \integers{n}}}{n} > 0
,\]
where $\integers{n} \coloneqq \{0, \ldots, n - 1\}$ is the set of the $n$ smallest natural integers. In this appendix, we will prove two basic properties of thick subsets of $\NN$ which we used in the body of the article:
\begin{lem}\label{lem:2179}
Let $\mcal{I}$ be a thick set of integers and let $k > 1$. Then there exists $l \in \integers{k}$ such that the set
\[\label{J}
\{j \in \NN|\ j k + l \in \mcal{I}\}
\]
is thick.
\end{lem}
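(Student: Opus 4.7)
The plan is a double application of the pigeonhole principle. Set $\delta \coloneqq \limsup_{n \to \infty} \abs{\mcal{I} \cap \integers{n}}/n$, which is strictly positive by thickness of $\mcal{I}$, and fix a strictly increasing sequence $(n_m)_m$ along which this limsup is attained, i.e.\ $\abs{\mcal{I} \cap \integers{n_m}}/n_m \to \delta$.

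For each $m$, partition $\integers{n_m}$ according to residue modulo $k$, i.e.\ write $\integers{n_m} = \bigsqcup_{l = 0}^{k - 1} \integers{n_m} \cap (k\NN + l)$. A pigeonhole over these $k$ classes furnishes some $l_m \in \integers{k}$ such that
\[
\abs{\mcal{I} \cap \integers{n_m} \cap (k\NN + l_m)} \geq \frac{1}{k} \abs{\mcal{I} \cap \integers{n_m}}.
\]
Since $l_m$ takes values in the finite set $\integers{k}$, a second pigeonhole yields a single $l \in \integers{k}$ and an infinite subsequence $(n_{m_p})_p$ along which $l_{m_p}$ is constantly equal to $l$.

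For this $l$, denote the candidate set of \eqref{J} by $\mcal{J} \coloneqq \{j \in \NN \mid jk + l \in \mcal{I}\}$. The map $j \mapsto jk + l$ is a bijection from $\{j \in \NN \mid jk + l < n_{m_p}\}$ onto $(k\NN + l) \cap \integers{n_{m_p}}$, and (for $n_{m_p} > l$, which holds for $p$ large) its domain is exactly $\integers{N_p}$ with $N_p \coloneqq \ceil{(n_{m_p} - l)/k}$. Therefore
\[
\frac{\abs{\mcal{J} \cap \integers{N_p}}}{N_p} = \frac{\abs{\mcal{I} \cap \integers{n_{m_p}} \cap (k\NN + l)}}{N_p} \geq \frac{\abs{\mcal{I} \cap \integers{n_{m_p}}}/k}{N_p},
\]
and since $N_p \sim n_{m_p}/k$ as $p \to \infty$, the right-hand side converges to $\delta > 0$. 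In particular $\limsup_p \abs{\mcal{J} \cap \integers{N_p}}/N_p \geq \delta$, which exhibits $\mcal{J}$ as thick and concludes the proof.

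The only slightly fiddly point is the bookkeeping between the two window sizes $n_{m_p}$ and $N_p$, but the asymptotic $N_p \sim n_{m_p}/k$ is immediate; beyond that, the argument is a pure pigeonhole, so I do not anticipate any genuine obstacle.
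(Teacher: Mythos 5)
Your proof is correct, and it is essentially the paper's argument: both partition $\mcal{I}$ by residue classes mod $k$ via the bijection $j \mapsto jk+l$ and conclude by an averaging/pigeonhole step, the only difference being that the paper phrases the pigeonhole as a subadditivity inequality for the limsup densities while you run it explicitly along a subsequence $(n_{m_p})_p$. The bookkeeping with $N_p = \ceil{(n_{m_p}-l)/k}$ matches the paper's formula exactly, so there is nothing to fix.
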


\begin{proof}
Let $\mcal{I}$ be a subset of $\NN$; and for $l \in \integers{k}$, denote the set defined by \eqref{J} as \squotes{$\mcal{J}_l$}. Then the $\mcal{J}_l$'s make a partition of $\mcal{I}$; so, for $n \in \NN$:
\[
\mcal{I} \cap \integers{n} = \bigsqcup_{l \in \integers{k}} \bigl((k \mcal{J}_l + l) \cap \integers{n}\bigr) = \bigsqcup_{l \in \integers{k}} \bigl(k \bigl(\mcal{J}_l \cap \integers{\ceil{(n - l) \div k}}\bigr) + l\bigr)
.\]
Thus,
\begin{multline}
\frac{\abs{\mcal{I} \cap \integers{n}}}{n}
= \sum_{l \in \integers{k}} \frac{\abs{\mcal{J}_l \cap \integers{\ceil{(n - l) \div k}}}}{n} \\
= \sum_{l \in \integers{k}} \frac{\ceil{(n - l) \div k}}{n} \frac{\abs{\mcal{J}_l \cap \integers{\ceil{(n - l) \div k}}}}{\ceil{(n - l) \div k}}
;\end{multline}
and therefore, letting $n \to \infty$:
\[
\limsup_{n \to \infty} \frac{\abs{\mcal{I} \cap \integers{n}}}{n} \leq \sum_{l \in \integers{k}} k^{-1} \limsup_{m \to \infty} \frac{\abs{\mcal{J}_l \cap \integers{m}}}{m}
.\]
So, for $\mcal{I}$ to be thick, one at least of the $\mcal{J}_l$'s has to be thick, proving the lemma.
\end{proof}

\begin{lem}\label{lem:harmonicthick}
If $\mcal{I}\subset\NN$ is thick, then the series
\[
\sum_{i\in\mcal{I}} i^{-1}
\]
is divergent.
\end{lem}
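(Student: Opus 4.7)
The plan is to argue by contrapositive: assume that the series $\sum_{i \in \mcal{I}} i^{-1}$ converges, and deduce that the upper asymptotic density of $\mcal{I}$ vanishes, which contradicts thickness.

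Under the assumption of convergence, the tail $T(N) \coloneqq \sum_{i \in \mcal{I},\ i \geq N} i^{-1}$ tends to $0$ as $N \to \infty$. The key observation is the elementary comparison: for any integers $N \leq n$, since $1/i \geq 1/n$ for every $i < n$, one has
\[
\frac{\abs{\mcal{I} \cap [N, n)}}{n} = \sum_{i \in \mcal{I} \cap [N, n)} \frac{1}{n} \leq \sum_{i \in \mcal{I} \cap [N, n)} \frac{1}{i} \leq T(N) .
\]

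Bounding trivially $\abs{\mcal{I} \cap \integers{N}} \leq N$, this yields $\abs{\mcal{I} \cap \integers{n}} \leq N + n \, T(N)$, and thus
\[
\limsup_{n \to \infty} \frac{\abs{\mcal{I} \cap \integers{n}}}{n} \leq T(N) .
\]
Letting $N \to \infty$ makes $T(N) \to 0$, so the upper asymptotic density of $\mcal{I}$ is $0$; hence $\mcal{I}$ is not thick, which is the desired contrapositive.

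This argument is entirely elementary and I expect no real obstacle: the only slightly subtle point is the choice to compare $1/i$ with $1/n$ (rather than with $1/N$), which is what allows the bound to feature $n$ in both the numerator and denominator and thus give information on the density at scale $n$.
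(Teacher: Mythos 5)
Your argument is correct, and it takes a genuinely different route from the paper's. The paper proves the statement directly: fixing $0 < p'' < p' < \limsup_n \abs{\mcal{I} \cap \integers{n}} \div n$, it builds by induction an increasing sequence $(n_k)$ with $n_{k+1} \geq n_k \div (p' - p'')$ along which $\abs{\mcal{I} \cap \integers{n_{k+1}}} \geq p' n_{k+1}$, then observes that each block $\mcal{I} \cap [n_k, n_{k+1})$ has at least $p'' n_{k+1}$ elements, each contributing at least $n_{k+1}^{-1}$, so that the block contributes at least $p''$ to the harmonic sum; summing over $k$ gives divergence. Your contrapositive bypasses the subsequence construction entirely: the single inequality $\abs{\mcal{I} \cap [N, n)} \div n \leq T(N)$, obtained by comparing $1/n$ with $1/i$ termwise, immediately bounds the upper density by the tail $T(N)$, which vanishes once the series converges. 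This is shorter and, I would say, a touch more elegant; the paper's version is more constructive in flavour (it exhibits explicit blocks carrying definite mass) at the cost of some bookkeeping with the auxiliary parameters $p'$, $p''$ and the recursively defined $n_k$. Both are entirely elementary and both implicitly restrict the sum to $i \geq 1$, which is harmless.
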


\begin{proof}
Assume that $\mcal{I}$ is thick and let
\[
p \coloneqq \limsup_{n \to \infty} \frac{\abs{\mcal{I} \cap \integers{n}}}{n}
\]
(which then is positive); and fix arbitrary $p > p' > p'' > 0$.

Then, there will be arbitrarily large $n \in \NN$ such that%
\[\label{high-density n}
\abs{\mcal{I} \cap \integers{n}} \geq p' n
,\]
so that we can define the following sequence by induction: $n_0 = 1$, and $n_{k + 1}$ is the smallest $n$ satisfying \eqref{high-density n} such that $n_{k + 1} \geq n_k \div (p' - p'')$.

Now, observe that
\[
\abs{\mcal{I} \cap [n_k, n_{k + 1})}
\geq \abs{\mcal{I} \cap \integers{n_{k + 1}}} - n_k
\geq p' n_{k + 1} - (p' - p'') n_{k + 1}
= p'' n_{k + 1},
\]
so that
\[
\sum_{\mcal{I} \cap [n_k, n_{k + 1})} i^{-1}
\geq \abs{\mcal{I} \cap [n_k, n_{k + 1})} \inf_{i \in \mcal{I} \cap [n_k, n_{k + 1})} i^{-1}
\geq p'' n_{k + 1} \times n_{k + 1}^{-1} = p''
,\]
and therefore:
\[
\sum_{\substack{i \in \mcal{I}\\ i \geq 1}} i^{-1}
\geq \sum_{k = 0}^{\infty} \sum_{\substack{i \in \mcal{I}\\ n_k \leq i < n_{k + 1}}} i^{-1}
\geq \sum_{k = 0}^{\infty} p''
= \infty
,\]
proving the lemma.
\end{proof}

\section{Explicit estimate for the supremum of Gaussian processes}\label{sec:supGauss}

\begin{proof}[\proofname\ of Lemma~\ref{lem:regGaussian}]
Let $\theta \in (0, 1]$ and let $X$ satisfying the assumptions of the lemma. Then obviously, for the continuous version of $X$:
\[
\norm{X} (\omega) \leq \sum_{i = 0}^\infty \sup_{a \in \integers{2^i}} \abs{X (\omega)_{a 2^{-i}} - X (\omega)_{(a + 1) 2^{-i}}}
.\]
Therefore, for $(\gamma_i)_i$ a sequence of positive real numbers such that $\sum_i \gamma_i = 1$, one has that, for all $x \geq 0$:
\begin{multline}
\Pr (\norm{X} \geq x)
\leq \sum_{i = 0}^\infty \Pr (\sup_{a \in \integers{2^i}} \abs{X_{a 2^{-i}} - X_{(a + 1) 2^{-i}}} \geq \gamma_i x) \\
\leq \sum_i 2^i \sup_a \Pr (\abs{X_{a 2^{-i}} - X_{(a + 1) 2^{-i}}} \geq \gamma_i x)
.\end{multline}
But, uniformly in $a$,
\begin{multline}
\Pr (\abs{X_{a 2^{-i}} - X_{(a + 1) 2^{-i}}} \geq \gamma_i x)
= \Pr \bigl(\mcal{N} (0, 1) \geq \gamma_i x \div \var^{1/2} (X_{a 2^{-i}} - X_{(a + 1) 2^{-i}})\bigr) \\
\leq \Pr \bigl(\mcal{N} (0, 1) \geq 2^{i \theta} \gamma_i x\bigr)
\leq 2 \exp \bigl(-2^{2 i \theta - 1} (\gamma_i x)^2\bigr)
;\end{multline}
so, taking $\gamma_i \coloneqq (1 - 2^{-\theta \div 2}) 2^{-i \theta \div 2}$:
\[
\Pr (\norm{X} \geq x) \leq \sum_{i = 0}^{\infty} 2^{i + 1} \exp \bigl(-(1 - 2^{-\theta \div 2})^2 \times 2^{i \theta - 1} x^2\bigr)
.\]
Provided $x \geq 2 \div (1 - 2^{-\theta \div 2}) \theta^{1/2} \eqqcolon C_{\setCa{o}} (\theta)$, one has (bounding $2^{i \theta}$ below by $(1 + i \theta \log 2)$)
\[
\exp \bigl(-2^{i \theta} x^2 \div 2 (1 - 2^{-\theta \div 2})^2\bigr) \leq \exp (-x^2 \div 2 (1 - 2^{-\theta \div 2})^2 - \log (4) i\bigr)
,\]
so that, for $x \geq C_{\ref{o}}$:
\[
\Pr (\norm{X} \geq x) \leq \Bigl(\sum_{i = 0}^{\infty} 2^{i + 1} 4^{-i}\Bigr) \exp \bigl(-x^2 \div 2 (1 - 2^{-\theta \div 2})^2\bigr)
\eqqcolon 4 e^{-C_{\ref{c}} (\theta) x^2}
.\]
On the other hand, for $x < C_{\ref{o}}$ one has obviously $\Pr (\norm{X} \geq x) \leq 1$; so Equation~\eqref{regGaussian} follows with $C_{\ref{d}} \coloneqq 4 \vee e^{C_{\ref{c}} C_{\ref{o}}^2}$.
\end{proof}

\section{Almost diagonal matrices}\label{sec:alm-diag-mat}

\begin{proof}[\proofname\ of Lemma~\ref{lem:alm-diag-mat}]
Consider $\mbf{A}$ satisfying the assumptions of the lemma, and denote $\mbf{I}_n - \mbf{A} \eqqcolon \mbf{H}$.
The first part of this proof will consist in deriving estimates on the entries of $\mbf{H}$ and its powers. Denote respectively
\begin{align}
\mbf{H} & \eqqcolon (\!(h_{i j})\!)_{i, j \in \integers{n}} ;\\
\mbf{H}^k & \eqqcolon (\!(h^{(k)}_{i j})\!)_{i, j \in \integers{n}} && \forall k \geq 0 .
\end{align}
Then the assumptions of the lemma ensure that one has
$\abs{h_{i i}} = 0\ \forall i$, resp.\ab\ $\abs{h_{i j}} \leq \epsilon^{\abs{i - j}}\ab\ \forall i \neq j$, and hence
\[
\abs{h^{(k + 1)}_{i j}} \leq \sum_{i' \in \integers{n}} \abs{h_{i i'}} \abs{h^{(k)}_{i' j}} = \sum_{i' \neq i} \epsilon^{\abs{i - i'}} \abs{h^{(k)}_{i' j}} \quad \forall i, j \quad \forall k
.\]
That suggests to define by induction:
\[
\left\{\begin{aligned}
\mfrak{h}^{(0)}_z & \coloneqq \1{z = 0} && \forall z \in \ZZ \\
\mfrak{h}^{(k + 1)}_z & \coloneqq \sum_{z' \neq z} \epsilon^{\abs{z - z'}} \mfrak{h}^{(k)}_{z'} && \forall z \in \ZZ \quad \forall k \geq 0 ,
\end{aligned}\right.
\]
so that one has
\[\label{h leq frak-h}
\abs{h^{(k)}_{i j}} \leq \mfrak{h}^{(k)}_{i - j} \quad \forall i, j \quad \forall k
.\]

The interest of having introduced the $\mfrak{h}^{(k)}_z$'s is that these are easier to bound than the $h^{(k)}_{i j}$'s themselves. To bound the $\mfrak{h}^{(k)}_z$'s, we begin with observing that one has obviously by induction:
\begin{multline}\label{bound-hkz_naive}
\mfrak{h}^{(k)}_{z}
= \sum_{\substack{(0, s_1, s_2, \ldots, s_{k - 1}, z)\\ 0 \neq s_1, s_1 \neq s_2, \ldots, s_{k - 1} \neq z}} \epsilon^{\sum_{i \in \integers{k}}\abs{s_i - s_{i + 1}}} \\
= \sum_{n \geq 0} \card \Bigl\{(0, s_1, s_2, \ldots, s_{k - 1}, z)\Big|\ s_{i + 1} \neq s_i\ \forall i\ \text{and}\ \sum_i \abs{s_i - s_{i + 1}} = n\Bigr\} \epsilon^{n}
.\end{multline}
To bound the cardinality appearing in \eqref{bound-hkz_naive}, we observe that a $(k + 1)$-tuple $(0,\ab s_1,\ab s_2,\ab \ldots,\ab s_{k - 1},\ab z)$ such that $s_{i + 1} \neq s_i\ \forall i$ and $\sum_i \abs{s_i - s_{i + 1}} = n$ (we will call such a $(k + 1)$-tuple as \emph{valid}) can be \emph{coded} by a word of $n$ symbols from $\{\psymb, \Psymb, \msymb, \Msymb\}$, in the following way: successively, for each $i$ we write $(\abs{s_{i + 1} - s_{i}} - 1)$ symbols \quotes{$\sign (s_{i + 1} - s_i)$} followed by a symbol \quotes{$\sign (s_{i + 1} - s_i)_|$}—for instance, for $k = 5$, $z = 2$, $n = 8$, one would have
\[
(0, 1, 4, 2, 1, 2) \mapsto \text{\quotes{$\Psymb\psymb\psymb\Psymb\msymb\Msymb\Msymb\Psymb$}}
.\]
Obviously such a coding in injective. Moreover, for given $k, z, n$, an $n$-character word may be the image of a valid $(k + 1)$-tuple only if $z$ and $n$ have the same parity, that the word contains $(n + z) \div 2$ symbols from $\{\psymb, \Psymb\}$ vs.\ $(n - z) \div 2$ symbols from $\{\msymb, \Msymb\}$, and that $k$ exactly of the $n$ symbols, necessarily including the last one, are from $\{\Psymb, \Msymb\}$. Henceforth:
\begin{multline}\label{bound on card}
\card \Bigl\{(0, s_1, s_2, \ldots, s_{k - 1}, z)\Big|\ s_{i + 1} \neq s_i\ \forall i\ \text{and}\ \sum_i \abs{s_i - s_{i + 1}} = n\Bigr\} \\
\leq \1{2 \mid n - z} \binom{n}{(n - \abs{z}) \div 2} \binom{n - 1}{k - 1}
.\end{multline}
In the end, combining \eqref{h leq frak-h}, \eqref{bound-hkz_naive} and \eqref{bound on card}:
\[\label{thebound_hkij}
\abs{h^{(k)}_{i j}} \leq \sum_{m \geq 0} \binom{\abs{i - j} + 2 m}{m} \binom{\abs{i - j} + 2 m - 1}{k - 1} \epsilon^{\abs{i - j} + 2 m} \quad \forall i, j \quad \forall k
.\]

\bigskip

After these preliminary estimates, let us turn to proving the lemma itself. We begin with the first part, namely, bounding $\det \mbf{A}$ below. For $\mbf{X} \eqqcolon (\!(x_{i j})\!)_{i, j}$ an $n \times n$ matrix, denote
\[
\norm{\mbf{X}} \coloneqq \sup_{j \in \integers{n}} \sum_{i \in \integers{n}} \abs{x_{i j}}
:\]
$\norm{\bcdot}$ is the operator norm of $\mbf{X}$ when seen as an operator from $\ell^1 (\integers{n})$ into itself, so it is sub-multiplicative. Then, the formula
\[
\log (\mbf{I}_n - \mbf{H}) = \sum_{k = 1}^{\infty} k^{-1} \mbf{H}^{k}
\]
converges as soon as $\norm{\mbf{H}} < 1$, then yielding:
\begin{multline}
\abs{\trace \log (\mbf{I}_n - \mbf{H})}
\leq \sum_{k = 1}^{\infty} k^{-1} \abs{\trace \mbf{H}^k} \\
\leq 0 + \tsfrac{1}{2} \abs{\trace \mbf{H}^2} + n \sum_{k \geq 3} k^{-1} \norm{\mbf{H}^k} \\
\leq \tsfrac{1}{2} \abs{\trace \mbf{H}^2} + n \sum_{k \geq 3} k^{-1} \norm{\mbf{H}}^k
.\end{multline}
But the assumptions on the entries of $\mbf{H}$ imply that
\[\label{norm-H}
\norm{\mbf{H}} \leq \sum_{z \in \mbf{Z}^*} \epsilon^{\abs{z}} = \frac{2 \epsilon}{1 - \epsilon}
,\]
which is $< 1$ as soon as $\epsilon < 1/3$; and on the other hand, we get from \eqref{thebound_hkij} that, for all $i$,
\[\label{bound_h2ii}
\abs{h^{(2)}_{i i}}
\leq \sum_{m \geq 1} \binom{2 m}{m} (2 m - 1) \epsilon^{2 m} \leq 2 \epsilon^2 + \sum_{m \geq 2} 2^{2 m} (2 m - 1) \epsilon^{2 m}
\eqqcolon 2 \Phi_{\setCa{m}} (\epsilon) \epsilon^2
,\]
so that $\abs{\trace \mbf{H}^2} \leq 2 n \Phi_{\ref{m}} (\epsilon) \epsilon^2$. In the end:
\begin{multline}
\det \mbf{A}
= \det \exp \log (\mbf{I}_n - \mbf{H})
= \exp \trace \log (\mbf{I}_n - \mbf{H}) \\
\geq \exp \Biggl(-n \Phi_{\ref{m}} (\epsilon) \epsilon^2 - n \sum_{k \geq 3} k^{-1} \biggl(\frac{2 \epsilon}{1 - \epsilon} \biggr)^{k}\Biggr) \eqqcolon \exp (-n \Phi_{\ref{g}} (\epsilon) \epsilon^2)
,\end{multline}
which is Equation~\eqref{det-alm-diag-mat}.

\bigskip

Now let us handle the second part of the lemma, namely, bounding the entries of $(\mbf{A}^{-1} - \mbf{I}_n)$. Provided $\norm{\mbf{H}} < 1$, one has
\[
\mbf{A}^{-1} - \mbf{I}_n = \sum_{k = 1}^{\infty} \mbf{H}^k
,\]
so that
\[\label{leq_sum_hk}
\abs{b_{i j} - \1{i = j}} \leq \sum_{k = 1}^{\infty} \abs{h^{(k)}_{i j}} \quad \forall i, j
.\]
To bound the r-h.s.\ of \eqref{leq_sum_hk}, we write that, starting from \eqref{thebound_hkij}:
\begin{multline}\label{computn-toto}
\sum_{k \geq 1} \abs{h^{(k)}_{i j}}
\leq \sum_{m \geq 0} \binom{\abs{i - j} + 2 m}{m} \biggl(\sum_{k = 1}^{\abs{i - j} + 2 m} \binom{\abs{i - j} + 2 m - 1}{k - 1}\biggr) \epsilon^{\abs{i - j} + 2 m} \\
= \sum_{m \geq 0} \binom{\abs{i - j} + 2 m}{m} \1{\abs{i - j} + 2 m \geq 1} 2^{\abs{i - j} + 2 m - 1} \epsilon^{\abs{i - j} + 2 m} \\
= \biggl(\sum_{m \geq 0} \1{\abs{i - j} + 2 m \geq 1} \binom{\abs{i - j} + 2 m}{m} (4 \epsilon^2)^{m}\biggr) \times 2^{\abs{i - j} - 1} \epsilon^{\abs{i - j}}
.\end{multline}
But we observe that, for $z \geq 1$, $x > 0$,
\begin{multline}
\sum_{m \geq 0} \binom{z + 2 m}{m} x^m
= 1 + \sum_{m \geq 1} \binom{z + 2 m}{m} x^m
\leq 1 + \sum_{m \geq 1} \frac{(z + 2 m)^{m}}{m!} x^m \\
\leq 1 + \sum_{m \geq 1} \frac{(2 z)^m + (4 m)^m}{m!} x^m \\
\leq 1 + \sum_{m \geq 1} \frac{(2 z x)^m}{m!} + \sum_{m \geq 1} (4e x)^m
= e^{2 z x} + \frac{4e x}{1 - 4e x} \\
\leq \biggl(e^{2 x} + \frac{4e x}{1 - 4e x}\biggr)^z
\eqqcolon \Phi_{\setCa{n}} (x)^z
,\end{multline}
so that here, for $i \neq j$:
\[
\sum_{k = 1}^{\infty} \abs{h^{(k)}_{i, j}} \leq \Phi_{\ref{n}} (4 \epsilon^2)^{\abs{i - j}} \times 2^{\abs{i - j} - 1} \epsilon^{\abs{i - j}} \eqqcolon 2^{\abs{i - j} - 1} (\Phi_{\ref{h}} (\epsilon) \epsilon)^{\abs{i - j}}
,\]
which is Equation~\eqref{ctrl_bij}.

Equation \eqref{ctrl_bii} for the case $i = j$ is derived in the same way as \eqref{ctrl_bij}, with just a few minor differences at the beginning of the computation: namely, in the l-h.s.\ of \eqref{computn-toto}, we treat apart the cases \quotes{$k = 1$} (which yields zero here since $h_{i i} = 0$ by assumption) and \quotes{$k = 2$} (which has already been handled by \eqref{bound_h2ii}); then all the sequel is the same.
\end{proof}

\begin{rmk}
The bounds \eqref{det-alm-diag-mat}–\eqref{ctrl_bii} of Lemma~\ref{lem:alm-diag-mat} are optimal at first order. Actually this is much more than needed to prove Theorem~\ref{thm:main}, and we could have got a sufficient result with a shorter proof; but it seemed interesting to me to state the sharp version of the lemma.
\end{rmk}

\bibliographystyle{plain}
\bibliography{bibliography}

\begin{thebibliography}{1}

\bibitem{Bender2012}
Christian Bender.
\newblock Simple arbitrage.
\newblock {\em Ann. Appl. Probab.}, 22(5):2067--2085, 2012.

\bibitem{Nourdin-book-fBm}
Ivan Nourdin.
\newblock {\em Selected aspects of fractional {B}rownian motion}, volume~4 of
  {\em Bocconi \& Springer Series}.
\newblock Springer, Milan; Bocconi University Press, Milan, 2012.

\bibitem{Pipiras-Taqqu}
Vladas Pipiras and Murad~S. Taqqu.
\newblock Deconvolution of fractional {B}rownian motion.
\newblock {\em J. Time Ser. Anal.}, 23(4):487--501, 2002.

\end{thebibliography}
\end{document}